\newcommand{\vol}[1]{\mathrm{vol}\left(#1\right)}
\newtheorem{thm}{Theorem}[section]
\newtheorem{cor}[thm]{Corollary}
\newtheorem{lemma}[thm]{Lemma}
\newtheorem{conjecture}[thm]{Conjecture}
\newtheorem{propo}[thm]{Proposition}
\theoremstyle{remark}
\begin{document}


\title[Local Liakopoulos-Meyer and the functional counterparts]{On local Liakopoulos-Meyer type inequalities and their functional counterparts}

\author[L. J. Al\'ias]{Luis J. Al\'ias}
\email{ljalias@um.es}
\address{Department of Mathematics. Universidad de Murcia, Spain}

\author[B. Gonz\'alez Merino]{Bernardo Gonz\'alez Merino}
\email{bgmerino@um.es}
\address{Department of Engineering and Technology of Computers, area of Applied Mathematics.
Universidad de Murcia, 
Spain}

\author[B. Mar\'in Gimeno]{Beatriz Mar\'in Gimeno}
\email{b.maringimeno@um.es}
\address{Department of Mathematics. University of Murcia, Spain}

\subjclass[2020]{Primary 52A20, Secondary 52A38, 52A40, 52A23}
\keywords{$s$-cover, log-concave functions, convex body, sections and projections, volume}

\thanks{The first and third authors are partially supported by PID2021-124157NB-I00 funded by MCIN/AEI
/10.13039/501100011033/ ‘ERDF A way of making Europe’, Spain. The second and third authors are partially supported by Ministerio de Ciencia, Innovación y Universidades project PID2022-136320NB-I00/AEI/10.13039/501100011033/FEDER, UE. 
The third author is also supported by Contratos
Predoctorales FPU-Universidad de Murcia 2024, Spain}

\date{\today}

\begin{abstract}
We provide a functional Rogers-Shephard type inequality for log-concave functions on $\mathbb R^n$ and any $1$-reducible $s$-cover of $[n]$. As a consequence, we derive a sharp local Liakopoulos-Meyer type inequality for $n$-dimensional convex bodies and $1$-reducible $s$-covers of any $\sigma\subset[n]$, solving a question studied by Brazitikos, Giannopoulos, Liakopoulos in \cite{BGL18} as well as Alonso-Guti\'errez, Bernu\'es, Brazitikos, Carbery in \cite{ABBC21}.
\end{abstract}

\maketitle

\section{Introduction}

Comparing volumes of sections or projections and the volume of a convex body has been studied in detail. One of the most prominent results is due to Bollob\'as and Thomason \cite{BT95} who extended the classical Loomis-Whitney inequality \cite{LM49}. They showed that for any $n$-dimensional convex body (i.e. convex and compact set with non-empty interior) $K$ of $\mathbb R^n$ and any $s$-cover $(\sigma_1,\dots,\sigma_m)$ of $[n]:=\{1,\dots,n\}$, then
\begin{equation}\label{eq:BollThom}
\mathrm{vol}_n(K)^s \leq \prod_{i=1}^m \mathrm{vol}_{|\sigma_i|}(P_{H_{\sigma_i}}K).
\end{equation}
Above, $\mathrm{vol}_n(\cdot)$ denotes the n-dimensional volume or Lebesgue measure, $P_HK$ denotes the orthogonal projection of $K$ onto a given subspace $H$ of $\mathbb R^n$. Moreover, let $\sigma\subseteq [n]$.  We say that $(\sigma_1,\dots,\sigma_m)$ is a $s$-cover of $\sigma$ if $\sigma_i\subseteq\sigma$, for $i=1,\ldots,m$ and $|\{i:j\in\sigma_i\}|=s$ for every $j\in \sigma$. Finally, $H_\sigma=\langle \{e_i:i\in\sigma\}\rangle$, where $\{e_i\}_{i=1}^n$ denote the canonical basis of $\mathbb R^n$. From now on, if we write $\vol{K}$ instead of $\mathrm{vol}_n(K)$, one assumes that the volume is computed with same index as the dimension of the set $K$. Moreover, let $\mathcal K^n$ be the set of all $n$-dimensional convex bodies, and let $\mathcal L^n_i$ be the set of all $i$-dimensional linear subspaces contained in $\mathbb R^n$, for $1\leq i\leq n$.

Brazitikos, Giannopoulos, and Liakopoulos \cite{BGL18} proposed a systematic study of these type of inequalities when $(\sigma_1,\ldots,\sigma_m)$ is a $s$-cover of a
proper subset $\sigma$ of $[n]$. In order to distinguish these type of inequalities, we denote them by local inequalities. Alonso-Guti\'errez, Bernu\'es, Brazitikos, and Carbery \cite{ABBC21} as well as Manui, Ndiaye, and Zvavitch \cite[Thm.~4]{MNZ24} have recently proven the following sharp local Bollob\'as-Thomason type inequality
\begin{equation}\label{eq:ineqProj}
\vol{K}^{m-s}\vol{P_{H_\sigma^\bot}K}^s \leq \frac{\prod_{i=1}^m{n-|\sigma_i| \choose n-|\sigma|}}{{n\choose n-|\sigma|}^{m-s}} \prod_{i=1}^m\vol{P_{H_{\sigma_i}^\bot}K}.
\end{equation}
Above, $K$ is an $n$-dimensional convex body and $(\sigma_1,\dots,\sigma_m)$ is a $s$-cover of some $\sigma\subset[n]$.  Let us observe that there exists a functional version of \eqref{eq:ineqProj} in \cite{ABBC21}. 

One can consider the dual question to the ones in \eqref{eq:ineqProj} as well as \eqref{eq:BollThom} by replacing the projections by sections. Indeed, Liakopoulos \cite{L19} had the idea of generalizing Meyer's result \cite{M88} to $s$-covers, proving that
\begin{equation}\label{eq:Liako19}
\vol{K}^s \geq \frac{\prod_{i=1}^m|\sigma_i|!}{n!^s} \prod_{i=1}^m \vol{K\cap H_{\sigma_i}},
\end{equation}
where $K\in\mathcal K^n$ and $(\sigma_1,\dots,\sigma_m)$ is a $s$-cover of $[n]$. 

Following the same pattern as in \cite{ABBC21}, the authors proved in this regard that the following local Meyer type inequality
\begin{equation}\label{eq:ineqSectABBC}
\vol{K}^{m-s}\max_{x\in\mathbb R^n}\vol{K\cap (x+H_\sigma^\bot)}^s\geq\dfrac{\prod_{i=1}^m(|\sigma|-|\sigma_i|)!}{(n(m-s)))!}\prod_{i=1}^m\vol{K\cap H_{\sigma_i}^\bot}
\end{equation}
holds for every $K\in\mathcal K^n$ and $(\sigma_1,\dots,\sigma_m)$ being a $s$-cover of some $\sigma\subset[n]$. Let us realize that, generally, the inequalities above are not sharp. The exceptional case of $m=2$ and $s=1$ was solved in \cite{AAGJV}:
\begin{equation}\label{eq:AACJVs=1}
   \vol{K} \max_{x\in\mathbb R^n}\vol{K\cap(x+H_\sigma^\bot}  \geq \frac{|\sigma_1|!|\sigma_2|!}{|\sigma|!} \vol{K\cap H_{\sigma_1}^\bot}\vol{K\cap H_{\sigma_2}^\bot},
    \end{equation}
    where $(\sigma_1,\sigma_2)$ is a $1$-cover of $\sigma\subset[n].$

Again, the authors of \cite{ABBC21} proved functional versions of \eqref{eq:ineqSectABBC} and the authors of \cite{AAGJV} proved functional versions of \eqref{eq:AACJVs=1}. Sometimes, in order to derive these results, it turns out to be very helpful to use certain functional results of some geometric flavour. One of the most relevant tools in Convex Geometry is the Brunn concavity principle \cite{AGM15}, which states that the function
\[
f:H\rightarrow [0,+\infty),\quad\text{defined by}\quad f(x):=\vol{K\cap(x+H^\bot)},
\]
 is a $\frac{1}{i}$-concave function, where $K\in\mathcal K^n$ and $H\in\mathcal L^n_{n-i}$, for some $1\leq i\leq n-1$. This function belongs to the larger class of the so called log-concave functions. We say that $f:\mathbb R^n\rightarrow[0,+\infty)$ is a $t$-concave function, $t\geq 0$, if $f^{t}$ is a concave function. In particular, we say that $f$ is a log-concave (or $0$-concave) function if $\log(f)$ is concave. Equivalently, for every $x,y\in\mathbb R^n$ and $\lambda\in[0,1]$, it holds that $f((1-\lambda)x+\lambda y) \geq f(x)^{1-\lambda}f(y)^\lambda$. Moreover, let $\mathcal F(\mathbb R^n)$ be the set of integrable log-concave real functions whose domain is a convex subset of $\mathbb R^n$. 

Log-concave functions play a crucial role in Convex Geometry, as they can be seen as a proper generalization of the space of convex bodies (see \cite{KM05}) via, for instance, the natural identification of every $K\in\mathcal K^n$ with the characteristic function
\[
\chi_K:\mathbb R^n\rightarrow[0,\infty)\quad\text{where}\quad \chi_K(x):=\left\{\begin{array}{cc}
    1, & \text{ if }x\in K \\
    0, & \text{ otherwise.}
\end{array}\right.
\]
In the last years, many authors have proven results for log-concave functions extending some geometrical aspects, see for instance \cite{A19, ABG20, ABG202, AGJV16, AGJV18, AKM04, B88, BCF14, CF13, Co06, FSY22, FZ18, FM07, FM08, IN22, IT21, KM05, LMU24, LSW21, T25}.

Our first result is a functional inequality which involves $s$-covers of some $\sigma\subset[n]$ as well as a log-concave function $f$ and some restrictions of $f$ to certain linear subspaces. 
Motivated by the definition of reducible cover given in \cite{BT95}, we say that a $s$-cover $(\sigma_1,\dots,\sigma_m)$  of $\sigma\subseteq [n]$ is $1$-reducible if there exists a reordering of $(\sigma_1,\dots,\sigma_m)$ of the form $(\sigma_{1_1},\dots,\sigma_{1_{i_1}},\dots,\sigma_{s_1},\dots,\sigma_{s_{i_s}})$ such that $(\sigma_{j_1},\dots,\sigma_{j_{i_j}})$ is a $1$-cover of $\sigma$, for every $j=1,\dots,s$. 
Furthermore, given $(\sigma_1,\dots,\sigma_m)$ a $s$-cover of $\sigma\subseteq [n]$, we say that $(\overline{\sigma}_1,\dots,\overline{\sigma}_k)$ is the $1$-cover of $\sigma$ induced by the former $s$-cover, if $(\overline{\sigma}_1,\dots,\overline{\sigma}_k)$ are all the different non-empty possible subsets of the form $\cap_{j=1}^m \sigma_j^{\varepsilon(j)}$, where $\varepsilon(j)\in\{0,1\}$, $\sigma_j^0=\sigma_j$ and $\sigma_j^1=\sigma\setminus\sigma_j$ (see \cite{BT95, BKX23}). Finally, for every $X\subset\mathbb R^n$, we say that $\mathrm{conv}(X)$ is the convex hull of $X$, i.e.~the smallest convex set containing $X$.
\begin{thm}\label{thm:FunctionalRSscoverings}
    Let $f\in\mathcal F(\mathbb R^n)$ and let
    $(\sigma_1,\dots,\sigma_m)$  a $1$-reducible $s$-cover of $[n]$. Then,
    \begin{equation}\label{eq:FunctionalRSscoverings}
        \|f\|_\infty^{m-s}\left(\int_{\mathbb R^n}f(x)dx\right)^s\geq \frac{\prod_{j=1}^m|\sigma_j|!}{n!^s}\prod_{j=1}^m\int_{H_{\sigma_j}}f(x)dx.
    \end{equation}
    Moreover, equality holds above if and only if $f=\|f\|_\infty\chi_C$ where 
    \[
    C=\mathrm{conv}\left(\left\{\{0\}^{|\overline{\sigma}_1|}\times\ldots\times \{0\}^{|\overline{\sigma}_{j-1}|}\times K_j\times \{0\}^{|\overline{\sigma}_{j+1}|}\times\ldots\times \{0\}^{|\overline{\sigma}_k|}:j\in[k]\right\}\right),
    \]
    with $0\in K_j\in\mathcal{K}^{|\overline{\sigma}_j|}$ for $j=1,\dots,k$ and with $(\overline{\sigma}_1,\dots,\overline{\sigma}_k)$ being the $1$-cover of $[n]$ induced by the $s$-cover $(\sigma_1,\dots,\sigma_m)$.
\end{thm}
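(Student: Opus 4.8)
The plan is to reduce the $s$-cover inequality to an iterated application of a $1$-cover functional Rogers–Shephard type inequality. Since the $s$-cover $(\sigma_1,\dots,\sigma_m)$ is $1$-reducible, we may reorder it so that it splits into $s$ blocks, each of which is a $1$-cover of $[n]$. The key building block should be a functional inequality of the form
\[
\Big(\int_{\R^n} f\Big)\|f\|_\infty^{r-1} \geq \frac{\prod_{j=1}^r |\tau_j|!}{n!}\prod_{j=1}^r \int_{H_{\tau_j}} f,
\]
valid for any $1$-cover $(\tau_1,\dots,\tau_r)$ of $[n]$ and any $f\in\mathcal F(\R^n)$; this is the functional Rogers–Shephard / Meyer-type statement that, for characteristic functions, recovers \eqref{eq:Liako19} with $s=1$. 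I expect this one-cover case to be established earlier in the paper (or to follow from Brunn's concavity principle together with a functional Rogers–Shephard inequality), so I will invoke it. Then, writing the reordered cover as the concatenation of $1$-covers $\mathcal C_1,\dots,\mathcal C_s$ and multiplying the corresponding $s$ inequalities, the left-hand sides combine to $\|f\|_\infty^{m-s}\big(\int f\big)^s$ (since $\sum_j(|\mathcal C_j|-1)=m-s$) and the right-hand sides combine to exactly $\frac{\prod_j|\sigma_j|!}{n!^s}\prod_j\int_{H_{\sigma_j}}f$. This yields \eqref{eq:FunctionalRSscoverings}.

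For the characterization of equality, the main point is that equality in the product of the $s$ block-inequalities forces equality in each individual one-cover inequality, so the problem reduces to describing the equality cases of the one-cover inequality for \emph{every} of the $s$ reorderings simultaneously. First I would argue that equality forces $f=\|f\|_\infty\chi_C$ for some convex body $C$ with $0\in C$: the strict log-concavity direction of Brunn's principle (or of the functional Rogers–Shephard inequality used) degenerates only on indicator functions. Once $f$ is an indicator, the statement becomes the geometric equality case of the Liakopoulos–Meyer inequality \eqref{eq:Liako19} intersected over all the $1$-cover refinements, and one shows that $C$ must be a ``generalized double cone'' (convex hull of bodies sitting in the coordinate subspaces $H_{\overline\sigma_j}$), i.e.\ of the displayed form. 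The cleanest route is: (i) reduce via the induced $1$-cover $(\overline\sigma_1,\dots,\overline\sigma_k)$, so that every $\sigma_i$ is a union of some of the $\overline\sigma_j$'s; (ii) apply the known equality analysis of the sharp one-cover Meyer-type inequality (cf.\ \eqref{eq:AACJVs=1} and its functional form in \cite{AAGJV}) blockwise; (iii) check that the $C$ in the statement indeed attains equality by a direct computation of the slices $\int_{H_{\sigma_j}}\chi_C$, which are volumes of the obvious coordinate sections of a double cone, and of $\vol{C}$.

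The main obstacle will be the equality characterization, specifically propagating ``equality holds in all $s$ simultaneous one-cover inequalities'' into the single rigid description of $C$. The subtlety is that each one-cover inequality only pins down the shape of $C$ relative to the coordinate subspaces appearing in that particular reordering; one must combine the constraints coming from all blocks of the $1$-reducible decomposition to conclude that $C$ is forced to be the convex hull of coordinate-subspace bodies indexed by the \emph{induced} $1$-cover. I would handle this by an induction on $k$ (the size of the induced $1$-cover), peeling off one $\overline\sigma_j$ at a time: fixing equality in a one-cover inequality that separates $\overline\sigma_j$ from its complement shows $C$ is a cone over that coordinate, and iterating gives the full double-cone structure. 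The forward (sufficiency) direction is then the routine verification described in step (iii) above, which I will carry out by Fubini, integrating $\chi_C$ over the coordinate subspaces and using that volumes of sections and of the whole double cone satisfy the predicted factorials $|\sigma_j|!$ and $n!$.
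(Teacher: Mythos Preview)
Your reduction for the inequality is exactly the paper's: split the $1$-reducible $s$-cover into $s$ blocks (each a $1$-cover of $[n]$), apply the $1$-cover case to each, and multiply. The $1$-cover building block you plan to invoke is Lemma~\ref{lem:OperatorLowerBound} in the paper; it is not quoted from the literature but proved there via a multi-function convolution operator $\overline{\bigstar}$ (extending that of \cite{AAGJV}) combined with a volume bound for convex hulls of coordinate blocks (Lemma~\ref{lem:generalizationRS}).

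For the equality case your outline is in the right direction---equality in the product forces equality in each of the $s$ block inequalities, and the resulting constraints must then be combined across blocks---but the paper's mechanism differs from your sketch in two places. First, it does not invoke Brunn's principle to force $f$ to be an indicator; rather, this comes out of the equality analysis of Lemma~\ref{lem:OperatorLowerBound} itself (carried out separately in Section~\ref{sec:ProofsOnEqualities}, and this is the technical heart of the argument), which yields $f=\|f\|_\infty\chi_{C_j}$ for an explicit double-cone $C_j$ in each block $j$, whence all the $C_j$ agree. Second, your proposed induction on $k$ ``peeling off one $\overline\sigma_j$ at a time'' has a gap: no single block $(\sigma_{j_1},\dots,\sigma_{j_{i_j}})$ of the decomposition need separate a given $\overline\sigma_j$ from its complement, since the induced $1$-cover $(\overline\sigma_1,\dots,\overline\sigma_k)$ arises only as the common refinement of \emph{all} the $\sigma_i$. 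The paper instead passes from the coarse description $C=C_j$ (indexed by the blocks) to the fine one (indexed by the $\overline\sigma_j$'s) via a set-identity (Lemma~\ref{lem:CapWithSubspace}) computing $C\cap H_\tau$ for coordinate subspaces $H_\tau$, applied iteratively across the different blocks; this intersection argument is what replaces your peeling step.
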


Notice that when $s=1$ and $m=2$ then Theorem \ref{thm:FunctionalRSscoverings} becomes the functional Rogers-Shephard type inequality \cite[Thm.~1.1]{AAGJV}. A remarkable consequence of Theorem \ref{thm:FunctionalRSscoverings} is the following result. It solves certain cases of the inequality formulated in \eqref{eq:ineqSectABBC}.

\begin{thm}\label{thm:sCoverDecom1Covers}
    Let $K\in\mathcal K^n$ and let
    $(\sigma_1,\dots,\sigma_m)$ a $s$-cover of $\sigma\subset[n]$ such that $(\sigma\setminus\sigma_1,\dots,\sigma\setminus\sigma_m)$ is a $1$-reducible $(m-s)$-cover of $\sigma$. Then,
    \[
   \vol{K}^{m-s} \max_{x\in\mathbb R^n}\vol{K\cap(x+H_\sigma^\bot}^{s}  \geq \frac{\prod_{j=1}^m(|\sigma|-|\sigma_j|)!}{|\sigma|!^{m-s}} \prod_{j=1}^m\vol{K\cap H_{\sigma_j}^\bot}.
    \]
    Moreover, equality holds above if and only if $0\in P_{H_\sigma}K=\mathrm{conv}(\{P_{H_{\sigma}}K\cap H_{\overline{\sigma}_i}:i=1,\dots,k\})$, where $(\overline{\sigma}_1,\dots,\overline{\sigma}_k)$ is the $1$-cover of $\sigma$ induced by the $s$-cover $(\sigma_1,\dots,\sigma_m)$, and where $K\cap(x+H_\sigma^\bot)$ are translates of each other for every $x\in P_{H_\sigma}K$.
\end{thm}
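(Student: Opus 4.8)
The plan is to derive Theorem~\ref{thm:sCoverDecom1Covers} from Theorem~\ref{thm:FunctionalRSscoverings} applied to the section function of $K$ along $H_\sigma^\bot$, mimicking the passage from \cite[Thm.~1.1]{AAGJV} to \eqref{eq:AACJVs=1} in the case $m=2$, $s=1$. Fix $K\in\mathcal K^n$, identify $H_\sigma$ with $\mathbb R^{|\sigma|}$ (and $\sigma$ with $[|\sigma|]$), and set $f:H_\sigma\to[0,+\infty)$, $f(y):=\vol{K\cap(y+H_\sigma^\bot)}$. By the Brunn concavity principle $f$ is $\tfrac1{n-|\sigma|}$-concave, hence log-concave; it is integrable with $\int_{H_\sigma}f=\vol{K}$, and since $K$ has non-empty interior one has $\|f\|_\infty>0$, so $f\in\mathcal F(\mathbb R^{|\sigma|})$. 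Because each $j\in\sigma$ belongs to exactly $s$ of the $\sigma_i$, it belongs to exactly $m-s$ of the sets $\sigma\setminus\sigma_i$, so $(\sigma\setminus\sigma_1,\dots,\sigma\setminus\sigma_m)$ is an $(m-s)$-cover of $\sigma$, which by hypothesis is $1$-reducible. Applying Theorem~\ref{thm:FunctionalRSscoverings} in ambient dimension $|\sigma|$, to the function $f$ and this cover (and using $m-(m-s)=s$) gives
\[
\|f\|_\infty^{\,s}\Bigl(\int_{H_\sigma}f\Bigr)^{m-s}\ \ge\ \frac{\prod_{j=1}^m|\sigma\setminus\sigma_j|!}{|\sigma|!^{\,m-s}}\prod_{j=1}^m\int_{H_{\sigma\setminus\sigma_j}}f .
\]

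Next I would rewrite every term by Fubini's theorem. Since $x+H_\sigma^\bot$ depends only on $P_{H_\sigma}x$, one gets $\|f\|_\infty=\max_{y\in H_\sigma}f(y)=\max_{x\in\mathbb R^n}\vol{K\cap(x+H_\sigma^\bot)}$, while $\int_{H_\sigma}f=\vol{K}$ as already noted. As $\sigma_j\subseteq\sigma$, we have $H_{\sigma_j}^\bot=H_{[n]\setminus\sigma_j}=H_{\sigma\setminus\sigma_j}\oplus H_\sigma^\bot$, an orthogonal sum, and $y+H_\sigma^\bot\subseteq H_{\sigma_j}^\bot$ for every $y\in H_{\sigma\setminus\sigma_j}$; hence $\int_{H_{\sigma\setminus\sigma_j}}f(y)\,dy=\vol{K\cap H_{\sigma_j}^\bot}$. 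Finally $|\sigma\setminus\sigma_j|=|\sigma|-|\sigma_j|$. Substituting these identities into the displayed inequality yields precisely the asserted inequality.

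For the equality discussion, equality in Theorem~\ref{thm:sCoverDecom1Covers} is equivalent to equality in the above application of Theorem~\ref{thm:FunctionalRSscoverings}, i.e.\ to $f=\|f\|_\infty\chi_C$ with $C$ of the form prescribed there. One first notes that the $1$-cover of $\sigma$ induced by $(\sigma\setminus\sigma_1,\dots,\sigma\setminus\sigma_m)$ agrees with the one induced by $(\sigma_1,\dots,\sigma_m)$: complementing all the values $\varepsilon(j)$ is a bijection of $\{0,1\}^m$ carrying $\bigcap_j(\sigma\setminus\sigma_j)^{\varepsilon(j)}$ to $\bigcap_j\sigma_j^{1-\varepsilon(j)}$, so the two families of non-empty intersections coincide; write $(\overline{\sigma}_1,\dots,\overline{\sigma}_k)$ for this common $1$-cover. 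Now $f=\|f\|_\infty\chi_C$ means that the support $P_{H_\sigma}K$ of $f$ equals $C$ and that all sections $K\cap(y+H_\sigma^\bot)$, $y\in P_{H_\sigma}K$, have the same $(n-|\sigma|)$-volume. Since $0\in K_j$ for every $j$, we get $0\in C$, and pairwise disjointness of the $\overline{\sigma}_i$ forces $C\cap H_{\overline{\sigma}_j}$ to be exactly the $j$-th factor of $C$ (a point $\sum_i\lambda_ix_i$ of $C$ lies in $H_{\overline{\sigma}_j}$ only if $x_i=0$ whenever $i\ne j$ and $\lambda_i>0$); therefore $0\in P_{H_\sigma}K=\mathrm{conv}(\{P_{H_\sigma}K\cap H_{\overline{\sigma}_i}:i\in[k]\})$. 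To promote ``equal volume'' to ``translates'', for $y_0,y_1\in\interior(P_{H_\sigma}K)$ and $\lambda\in(0,1)$ convexity of $K$ gives
\[
K\cap\bigl((1-\lambda)y_0+\lambda y_1+H_\sigma^\bot\bigr)\ \supseteq\ (1-\lambda)\bigl(K\cap(y_0+H_\sigma^\bot)\bigr)+\lambda\bigl(K\cap(y_1+H_\sigma^\bot)\bigr);
\]
after translating each section into $H_\sigma^\bot$, all three bodies have equal positive volume, which forces the Brunn--Minkowski inequality for the two sections to be an equality, so $K\cap(y_0+H_\sigma^\bot)$ and $K\cap(y_1+H_\sigma^\bot)$ are homothetic and, being of equal volume, are translates; this extends from interior points to all of $P_{H_\sigma}K$ by Hausdorff continuity of the sections. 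Conversely, if $0\in P_{H_\sigma}K=\mathrm{conv}(\{P_{H_\sigma}K\cap H_{\overline{\sigma}_i}\})$ and all sections are translates of a fixed $L\in\mathcal K^{n-|\sigma|}$, then $f=\vol{L}\,\chi_{P_{H_\sigma}K}=\|f\|_\infty\chi_{P_{H_\sigma}K}$ with $P_{H_\sigma}K$ of the required form (take $K_j:=P_{H_\sigma}K\cap H_{\overline{\sigma}_j}\ni 0$), so Theorem~\ref{thm:FunctionalRSscoverings}, and with it Theorem~\ref{thm:sCoverDecom1Covers}, holds with equality.

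The chain of Fubini identities is routine. The part needing genuine care is the equality case: reconciling the combinatorial description of $C$ furnished by Theorem~\ref{thm:FunctionalRSscoverings} with the geometric statement, and using the equality case of the Brunn--Minkowski inequality to pass from ``all sections have equal volume'' to ``all sections are translates''. One should also dispatch the degenerate conventions (for instance $0$-dimensional volumes when some $\sigma_j=\sigma$, and the case $|\sigma|=n$, where $f=\chi_K$ is directly log-concave), which are harmless but deserve a sentence.
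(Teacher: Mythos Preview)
Your proposal is correct and follows essentially the same approach as the paper: define the section function $f(y)=\vol{K\cap(y+H_\sigma^\bot)}$, apply Theorem~\ref{thm:FunctionalRSscoverings} to the $1$-reducible $(m-s)$-cover $(\sigma\setminus\sigma_1,\dots,\sigma\setminus\sigma_m)$, and translate each term via Fubini; for the equality case, invoke the equality characterization of Theorem~\ref{thm:FunctionalRSscoverings}, check that the induced $1$-covers coincide, and use the Brunn--Minkowski equality case to upgrade constant section volume to translates. If anything, your write-up is slightly more complete than the paper's, since you address the converse direction of the equality characterization, the extension from interior points to the boundary, and the degenerate cases explicitly.
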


One can verify that on every optimal inequality above, we find certain unconditional convex bodies fulfilling them with equality. Recall that $K\in\mathcal K^n$ is said to be unconditional if we have that $(x_1,\dots,x_n)\in K$ if and only if $(\varepsilon_1 x_1,\dots,\varepsilon_n x_n)\in K$, for any choice of signs $\varepsilon_i\in\{-1,1\}$, $i=1,\dots,n$. For instance, the Hanner polytope
\[
K_\sigma:=\mathrm{conv}(\{\pm e_j:j\in\sigma\})\times \prod_{j\in[n]\setminus\sigma}\mathrm{conv}(\{\pm e_j\})
\]
fulfills Theorem \ref{thm:sCoverDecom1Covers} with equality for every $\sigma\subset[n]$ independently of the choice of the $s$-cover.  Inequality \eqref{eq:ineqSectABBC} was originally stated as
\begin{equation}\label{eq:IneqOriginalABBC}
    \vol{K}^s\max_{x\in\mathbb{R}^n}\vol{K\cap(x+ H_\sigma^\bot)}^{m-s}\geq\dfrac{\prod_{i=1}^m |\sigma_i|!}{(ns)!}\prod_{i=1}^m\vol{K\cap (H_{\sigma_i}\oplus H_\sigma^\bot)},
\end{equation}
for every $K\in\mathcal K^n$ and 
$(\sigma_1,\dots,\sigma_m)$ a $s$-cover of $\sigma\subset[n]$. Our next result improves \eqref{eq:IneqOriginalABBC} when assuming that $K$ is an unconditional set, and we prove it by using some ideas within the original paper of Meyer \cite{M88}.
\begin{thm}\label{thm:ImprovedAlonso_Meyer}
    Let $K\in\mathcal K^n$ be unconditional, let $\sigma\subset[n]$ and $\sigma_i:=\sigma\setminus\{i\}$, $i=1,\dots,|\sigma|$, be a $(|\sigma|-1)$-cover of $\sigma$. Then,
    \[
    \vol{K}^{|\sigma|-1} \vol{K\cap H_\sigma^\bot} \geq \frac{|\sigma|!}{n^{|\sigma|}} \prod_{j\in\sigma}\vol{K\cap(H_{\sigma_j}\oplus H_\sigma^\bot)}.
    \]
\end{thm}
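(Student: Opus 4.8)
The plan is to pass to the positive orthant, where unconditionality turns $K$ into a lower set, and then to argue by induction on $|\sigma|$ along the lines of Meyer's proof \cite{M88} of the dual Loomis--Whitney inequality.

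First I would assume, after relabelling, that $\sigma=\{1,\dots,d\}$ with $d:=|\sigma|$ and put $k:=n-d$, so that $H_\sigma^\bot=\{x_1=\dots=x_d=0\}$ and $H_{\sigma_j}\oplus H_\sigma^\bot=\{x_j=0\}$ for $j\in\sigma$; the inequality to be proved is then
\[
\vol{K}^{\,d-1}\,\vol{K\cap\{x_1=\dots=x_d=0\}}\ \ge\ \frac{d!}{n^d}\prod_{j=1}^{d}\vol{K\cap\{x_j=0\}}.
\]
Since $K$ is unconditional, $L:=K\cap\R^n_{\ge0}$ is a lower set in $\R^n_{\ge0}$: if $x\in L$ and $0\le y\le x$ coordinatewise, then $y$ is a convex combination of the $2^n$ sign-reflections of $x$, all of which lie in $K$, so $y\in L$. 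As every body in the displayed inequality is symmetric with respect to the relevant sign changes, its volume equals the corresponding volume of $L$ times a power of $2$, and these powers cancel between the two sides; so it suffices to prove the displayed inequality with $K$ replaced by $L$.

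I would then induct on $d$. The case $d=1$ is trivial, and for $d=2$ the inequality is (the fibre form of) the sharp $m=2$, $s=1$ local Meyer-type inequality \eqref{eq:AACJVs=1} of \cite{AAGJV}: fibring $L$ over $H_\sigma$ and replacing it by the convex body $\{(z,w)\in\R^2\times\R^k:\|w\|_2\le c_k\,\phi(z)^{1/k}\}$ for a suitable $c_k>0$, where $\phi(z):=\vol{L\cap(z+H_\sigma^\bot)}$ is $\tfrac1k$-concave by Brunn's concavity principle (so this body is convex) and its $k$-dimensional fibre over $z$ has volume $\phi(z)$, inequality \eqref{eq:AACJVs=1} gives the bound with constant $\tfrac{1!\,1!}{2!}=\tfrac12\ge\tfrac{2}{n^2}$. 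For $d\ge3$ I would slice off the coordinate $x_d$: for each $t\ge0$ the slice $L^{(t)}:=L\cap\{x_d=t\}$ is a lower set in the $(n-1)$-dimensional coordinate orthant $\{x_d=t\}\cap\R^n_{\ge0}$, so the inductive hypothesis applies to it with the set $\{1,\dots,d-1\}$ in place of $\sigma$ and yields
\[
\vol{L^{(t)}}^{\,d-2}\,\vol{L^{(t)}\cap\{x_1=\dots=x_{d-1}=0\}}\ \ge\ \frac{(d-1)!}{(n-1)^{d-1}}\prod_{j=1}^{d-1}\vol{L^{(t)}\cap\{x_j=0\}}.
\]
Writing $v(t):=\vol{L^{(t)}}$, $g(t):=\vol{L^{(t)}\cap\{x_1=\dots=x_{d-1}=0\}}$ and $s_j(t):=\vol{L^{(t)}\cap\{x_j=0\}}$ for $j<d$, Brunn's principle makes $v$, $g$, $s_j$ respectively $\tfrac1{n-1}$-, $\tfrac1k$- and $\tfrac1{n-2}$-concave, all of them are non-increasing because $L$ is a lower set, and they attain their maxima at $t=0$; moreover $\vol{L}=\int_0^\infty v$, $\vol{L\cap\{x_d=0\}}=v(0)$, $\vol{L\cap\{x_1=\dots=x_d=0\}}=g(0)$ and $\vol{L\cap\{x_j=0\}}=\int_0^\infty s_j$ for $j<d$ by Fubini. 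Integrating the inductive hypothesis, rewritten as $\prod_{j<d}s_j(t)\le\tfrac{(n-1)^{d-1}}{(d-1)!}\,v(t)^{d-2}g(t)$, and combining it with the monotonicity and concavity just recorded --- through Hölder's inequality and the Euler Beta-integral estimates for non-increasing $p$-concave functions of one variable --- one recovers the inequality for $L$ with the constant $\tfrac{d!}{n^d}$.

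I expect the real obstacle to be exactly this last step: turning the product $\prod_{j<d}\int_0^\infty s_j$ of one-dimensional section-integrals back into a single integral with the correct constant, since the naive Hölder estimate points the wrong way for an arbitrary profile of the $s_j$. This is the one-dimensional core of Meyer's argument in \cite{M88}, and what makes it go through is that, thanks to $L$ being a lower set --- equivalently, to the unconditionality of $K$ --- each of the relevant one-dimensional marginals is simultaneously monotone and $p$-concave for a controlled exponent $p$; the constant $d!/n^d$ is then forced by the base case \eqref{eq:AACJVs=1} together with the Beta-integral bookkeeping in the inductive step.
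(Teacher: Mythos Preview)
Your proposal has a genuine gap at the inductive step, which you yourself flag as ``the real obstacle.'' The passage from the sliced inequality $\prod_{j<d}s_j(t)\le\frac{(n-1)^{d-1}}{(d-1)!}\,v(t)^{d-2}g(t)$ to the desired bound on $\bigl(\prod_{j<d}\int_0^\infty s_j\bigr)\cdot v(0)$ is never supplied: H\"older indeed points the wrong way, and the appeal to ``Beta-integral bookkeeping'' is a hope rather than an argument. No mechanism is given that would manufacture the specific constant $d!/n^d$ from the sliced constant $(d-1)!/(n-1)^{d-1}$; their ratio $d(n-1)^{d-1}/n^d$ has no evident interpretation in terms of the one-dimensional profiles $v,g,s_j$, and your base case already carries the constant $1/2$ rather than $2/n^2$, which only compounds the bookkeeping problem. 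As written, the induction does not close.

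The paper's proof is not inductive and is in fact much closer to Meyer's actual argument in \cite{M88}, which is a direct cone inclusion rather than a slicing recursion. Working on $K_+=K\cap\mathbb R^n_+$, for each $x\in K_+$ convexity gives
\[
K_+\ \supseteq\ \bigcup_{j\in\sigma}\mathrm{conv}\bigl(\{x\}\cup(K_+\cap\{x_j=0\})\bigr),
\]
and since these cones meet only in null sets one obtains the linear constraint
\[
\vol{K_+}\ \ge\ \frac{1}{n}\sum_{j\in\sigma}x_j\,\vol{K_+\cap\{x_j=0\}}\qquad\text{for every }x\in K_+.
\]
Together with the observation (from unconditionality) that the $H_\sigma^\bot$-component of any $x\in K_+$ already lies in $K_+\cap H_\sigma^\bot$, this places $K_+$ inside the product of an explicit simplex in $H_\sigma$ with $K_+\cap H_\sigma^\bot$; computing the volume of that product yields the constant $|\sigma|!/n^{|\sigma|}$ in one stroke. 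There is no one-dimensional integration core here --- the ``Meyer step'' is the cone inclusion itself, and it replaces the entire induction you are attempting.
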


The paper is organized as follows. In Section \ref{sec:ProofsOnIneqs} we will prove some functional inequalities that will allow us to prove the inequality of Theorem \ref{thm:FunctionalRSscoverings}. Then, using it, we will show also the inequality of Theorem \ref{thm:sCoverDecom1Covers}. Later in Section \ref{sec:ProofsOnEqualities}, we will prove all the characterizations of the equality cases of each inequality derived in Section \ref{sec:ProofsOnIneqs}. Later on in Section \ref{sec:FurtherBounds} we will derive Theorem \ref{thm:ImprovedAlonso_Meyer} as well as other inequalities that will allow us to provide for the first time a proper conjecture to the inequality \eqref{eq:ineqSectABBC}. 

\begin{conjecture}\label{conj:scover}
    Let $K\in\mathcal K^n$ and 
    $(\sigma_1,\dots,\sigma_m)$  a $s$-cover of $\sigma\subset[n].$ Then,
    \[
   \vol{K}^{m-s} \max_{x\in\mathbb R^n}\vol{K\cap(x+H_\sigma^\bot}^{s}  \geq \frac{\prod_{j=1}^m(|\sigma|-|\sigma_j|)!}{|\sigma|!^{m-s}} \prod_{j=1}^m\vol{K\cap H_{\sigma_j}^\bot}.
    \]
\end{conjecture}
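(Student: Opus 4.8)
To attack Conjecture~\ref{conj:scover} the plan is to lift it to the functional setting, exactly as Theorem~\ref{thm:sCoverDecom1Covers} is deduced from Theorem~\ref{thm:FunctionalRSscoverings}. By Brunn's concavity principle, $f(x):=\vol{K\cap(x+H_\sigma^\bot)}$ is log-concave on $H_\sigma\cong\R^{|\sigma|}$, with $\|f\|_\infty=\max_{x\in\R^n}\vol{K\cap(x+H_\sigma^\bot)}$, $\int_{H_\sigma}f=\vol{K}$, and, by Fubini, $\int_{H_{\sigma\setminus\sigma_j}}f=\vol{K\cap H_{\sigma_j}^\bot}$ (here $H_{\sigma\setminus\sigma_j}$ denotes the coordinate subspace of $H_\sigma$ spanned by $\{e_i:i\in\sigma\setminus\sigma_j\}$); moreover $(\sigma\setminus\sigma_1,\dots,\sigma\setminus\sigma_m)$ is an $(m-s)$-cover of $\sigma$. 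Substituting these identities, the conjectured inequality becomes exactly \eqref{eq:FunctionalRSscoverings} applied to $f$ with this induced cover. Hence it suffices to prove that \eqref{eq:FunctionalRSscoverings} holds for every $f\in\mathcal F(\R^n)$ and every $s$-cover of $[n]$, \emph{without} the $1$-reducibility hypothesis of Theorem~\ref{thm:FunctionalRSscoverings} (this also absorbs the reduction to $\sigma=[n]$).

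For the functional inequality I would keep Theorem~\ref{thm:FunctionalRSscoverings} as the already-solved core and treat a general $s$-cover $(\sigma_1,\dots,\sigma_m)$ of $[n]$ by induction, peeling off one coordinate (or one member of the cover) at a time, in the spirit of the Bollob\'as--Thomason handling of arbitrary uniform covers but carried out in the ``section'' (Berwald/Rogers--Shephard) regime rather than the ``projection'' (Loomis--Whitney/Shearer) regime of \eqref{eq:ineqProj}: choosing $j\in[n]$ lying in exactly the $s$ sets $\sigma_{i_1},\dots,\sigma_{i_s}$, one disintegrates $f$ along $e_j$, applies a Rogers--Shephard/Berwald-type step to split off the $e_j$-factor, and feeds the outcome into the inductive hypothesis for the $s$-cover of $[n]\setminus\{j\}$ obtained by deleting $j$ from $\sigma_{i_1},\dots,\sigma_{i_s}$, the goal being to reproduce the sharp constant $\prod_j|\sigma_j|!/n!^s$. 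A complementary route would be a shadow-system/symmetrization argument deforming $f$ toward the conjectured extremizer $f=\|f\|_\infty\chi_C$ of Theorem~\ref{thm:FunctionalRSscoverings} along which the defect of \eqref{eq:FunctionalRSscoverings} does not increase. As partial progress, one expects the unconditional case of Conjecture~\ref{conj:scover} to follow by polarity from the sharp projection inequality \eqref{eq:ineqProj}, the cover $\sigma_i=\sigma\setminus\{i\}$ being already dealt with, for unconditional $K$, by Theorem~\ref{thm:ImprovedAlonso_Meyer}.

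The main obstacle is that non-$1$-reducible covers admit no multiplicative decomposition. Theorem~\ref{thm:FunctionalRSscoverings} works because a $1$-reducible $s$-cover splits into $s$ many $1$-covers, so \eqref{eq:FunctionalRSscoverings} factors into iterated applications of the Rogers--Shephard type bound of \cite{AAGJV}; for a genuinely non-$1$-reducible cover --- the smallest instance being the ``triangle'' $2$-cover $(\{1,2\},\{2,3\},\{1,3\})$ of $[3]$ --- no such splitting exists and the chaining collapses. The natural remedy imported from projection inequalities, namely ``uncrossing'' a crossing pair $\sigma_a,\sigma_b$ into $\sigma_a\cap\sigma_b,\sigma_a\cup\sigma_b$ (which strictly raises $\sum_i|\sigma_i|^2$ and hence terminates at a laminar, therefore $1$-reducible, cover, by repeatedly peeling off the partition formed by its maximal members), fails here as well: the quantity $\prod_j\int_{H_{\sigma_j}}f$ is neither sub- nor super-modular under this move, since for $f=\chi_{[-1,1]^n}$ uncrossing strictly increases the right-hand side of \eqref{eq:FunctionalRSscoverings} while for $f=\chi_{\mathrm{conv}(\{\pm e_1,\dots,\pm e_n\})}$ it leaves it unchanged, so neither direction of the desired implication survives. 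What is really needed --- and what \cite{ABBC21} could only reach with a non-sharp constant --- is a Berwald-type inequality comparing $\prod_j\int_{H_{\sigma_j}}f$ directly with $\|f\|_\infty^{m-s}(\int_{\R^n}f)^{s}$ carrying the sharp factor $\prod_j|\sigma_j|!/n!^s$; producing it, most plausibly through the one-step-at-a-time induction above with the equality analysis anchored to the direct-sum bodies of Theorem~\ref{thm:sCoverDecom1Covers} (for the triangle cover, the cross-polytope, which indeed attains equality), is in my view the crux.
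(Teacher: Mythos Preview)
The statement you are attempting is Conjecture~\ref{conj:scover}, which the paper does \emph{not} prove: it is posed as an open problem, supported by the partial results Theorem~\ref{thm:sCoverDecom1Covers} (the $1$-reducible case), Theorem~\ref{thm:ImprovedAlonso_Meyer}, and Proposition~4.3 (unconditional $K$ with constant sections). There is therefore no proof in the paper to compare against.

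Your proposal is not a proof either, and to your credit you say so. The reduction step is correct and is exactly the one the paper uses to pass from Theorem~\ref{thm:FunctionalRSscoverings} to Theorem~\ref{thm:sCoverDecom1Covers}: via Brunn's principle the conjecture is equivalent to \eqref{eq:FunctionalRSscoverings} for an arbitrary (not necessarily $1$-reducible) $s$-cover of $[n]$. After that, however, what you offer are two heuristics (a coordinate-peeling induction and a symmetrization toward the extremal body) neither of which you carry out, together with a correct diagnosis of why the paper's method stalls: a non-$1$-reducible cover such as the triangle $2$-cover $(\{1,2\},\{2,3\},\{1,3\})$ of $[3]$ admits no splitting into $1$-covers, and uncrossing is not monotone on the section side. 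Your closing sentence --- that the crux is a sharp Berwald-type bound for $\prod_j\int_{H_{\sigma_j}}f$ against $\|f\|_\infty^{m-s}(\int f)^s$ for general covers --- is precisely the missing ingredient that keeps the statement a conjecture; you have located the gap but not closed it.
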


Finally, we will prove in Section \ref{sec:Appendix} some estimates in order to quantify the improvement achieved in Theorem \ref{thm:ImprovedAlonso_Meyer} with respect to the previously known result in \eqref{eq:IneqOriginalABBC}.

\section{Proofs of the inequalities}\label{sec:ProofsOnIneqs}

We start this section by proving a generalization of \cite[Lemma 2.2]{AAGJV}, see also \cite[Lemma]{RS58}. Notice that it can be also derived from \eqref{eq:Liako19} by selecting the $1$-cover $(\sigma_1,\dots,\sigma_m)$ of $[i_1+\cdots+i_m]$ given by $\sigma_1=\{1,\dots,i_1\}$, $\sigma_{j+1}=\{1+\sum_{k=1}^{j}i_{k},\dots,\sum_{k=1}^{j+1}i_{k}\}$, for every $j=1,\dots,m-1$. However we write down the proof for the sake of completeness and also because we do not need to use such heavy machinery to show it as the geometric Brascamp-Lieb inequality used by Liakopoulos. For every $K\in\mathcal K^n$, we denote by $\mathrm{int}(K)$, $\mathrm{cl}(K)$, and $\mathrm{bd}(K)$ the interior, closure, and boundary of $K$, respectively.

\begin{lemma}\label{lem:generalizationRS}
    Let $K_j\in\mathcal K^{i_j}$ and $x_j\in\mathbb R^{i_j}$, $j=1,\dots,m$. Then
    \[
    \begin{split}
            \vol{\mathrm{conv}\left(\left\{  \{x_1\}\times\cdots\times\{x_{j-1}\}\times K_j\times\{x_{j+1}\}\times\cdots\times\{x_m\}:j=1,\dots,m\right\}\right)} & \\
           \geq \frac{\prod_{j=1}^m i_j!}{(i_1+\cdots+i_m)!} \prod_{j=1}^m\vol{K_j}. &
    \end{split}
    \]
    Moreover, equality holds above if and only if $x_j\in K_j$ for every $j=1,\dots,m$ or $\mathrm{int}(K_j)=\emptyset$ for some $j=1,\dots,m$.
\end{lemma}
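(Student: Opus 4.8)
The plan is to proceed by induction on $m$, the number of factors. The base case $m=1$ is trivial, since the convex hull is just $K_1$ and the inequality is an equality. For the inductive step, the key observation is that the set
\[
C_m:=\mathrm{conv}\left(\left\{ \{x_1\}\times\cdots\times\{x_{j-1}\}\times K_j\times\{x_{j+1}\}\times\cdots\times\{x_m\}:j=1,\dots,m\right\}\right)
\]
can be described as follows: first form $C_{m-1}\subset\R^{i_1+\cdots+i_{m-1}}$ from the first $m-1$ blocks (using the points $x_1,\dots,x_{m-1}$), then $C_m$ is the convex hull of $C_{m-1}\times\{x_m\}$ together with $\{x_1\}\times\cdots\times\{x_{m-1}\}\times K_m$. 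So I would isolate a two-block version as the real engine of the proof: if $A\in\mathcal K^a$, $B\in\mathcal K^b$, $p\in\R^a$, $q\in\R^b$, then
\[
\vol{\mathrm{conv}\left((A\times\{q\})\cup(\{p\}\times B)\right)}\geq\frac{a!\,b!}{(a+b)!}\vol{A}\vol{B},
\]
with equality iff $p\in A$ and $q\in B$ (or one of the bodies is lower-dimensional). Applying this with $A=C_{m-1}$, $a=i_1+\cdots+i_{m-1}$, $B=K_m$, $b=i_m$, $p=(x_1,\dots,x_{m-1})$, $q=x_m$, and then invoking the inductive hypothesis $\vol{C_{m-1}}\geq\frac{\prod_{j=1}^{m-1}i_j!}{(i_1+\cdots+i_{m-1})!}\prod_{j=1}^{m-1}\vol{K_j}$, gives the result, since $\frac{a!\,b!}{(a+b)!}\cdot\frac{\prod_{j=1}^{m-1}i_j!}{a!}=\frac{\prod_{j=1}^m i_j!}{(a+b)!}$.

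For the two-block inequality I would use a Fubini/slicing argument. Write points of $\R^{a+b}$ as $(u,v)$ with $u\in\R^a$, $v\in\R^b$. For each $t\in[0,1]$ the slice of $\mathrm{conv}\big((A\times\{q\})\cup(\{p\}\times B)\big)$ at "height" interpolating between the two pieces contains $\big((1-t)A+tp\big)\times\big((1-t)q+tB\big)$, i.e.\ a translate of $(1-t)A$ in the $u$-variable and $tB$ in the $v$-variable; more carefully, I parametrize by saying the convex hull contains all points $(1-t)(a_0,q)+t(p,b_0)$ for $a_0\in A$, $b_0\in B$, $t\in[0,1]$, which for fixed $t$ sweeps out a set containing $((1-t)A+tp)\times(tB+(1-t)q)$ sitting in the affine slice $\{v-(1-t)q\in tB\}$... the cleanest route is to change variables so that the $v$-coordinate records $t$. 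Concretely, map $(a_0,b_0,t)\mapsto\big((1-t)a_0+tp,\ (1-t)q+tb_0\big)$; restricting to the slice where the second coordinate lies in the affine image, one computes the Jacobian and integrates $t^b(1-t)^a$ over $[0,1]$, which is the Beta integral $B(a+1,b+1)=\frac{a!\,b!}{(a+b)!}$. This yields the lower bound and, being an honest lower bound on the volume (the convex hull may be strictly larger), also sets up the equality discussion.

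The main obstacle, as usual with Rogers--Shephard type statements, is the \emph{equality characterization} rather than the inequality. For the inequality one only needs that the convex hull contains the explicit "twisted cylinder" described above; for equality one must show it contains \emph{nothing more}, up to measure zero. I would argue that if $p\notin A$ (and $A$ is full-dimensional), then one can exhibit an open set of extra points — e.g.\ take a hyperplane strictly separating $p$ from $A$ within $\R^a$, push slightly past it, and check that convex combinations produce a positive-volume region disjoint from the twisted cylinder — forcing strict inequality; symmetrically for $q\notin B$. Conversely, when $p\in A$ and $q\in B$ one checks the convex hull equals (up to boundary) exactly $\{(u,v): \exists t\in[0,1],\ u\in(1-t)A+tp,\ v\in tB+(1-t)q\}$, so the Fubini computation is tight. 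Propagating this back through the induction: equality in the $m$-block statement forces equality at every application of the two-block lemma, hence $x_m\in K_m$ and $(x_1,\dots,x_{m-1})\in C_{m-1}$, and one must check that the latter membership together with equality in the inductive step is equivalent to $x_j\in K_j$ for all $j<m$ — this uses that $(x_1,\dots,x_{m-1})\in C_{m-1}$ is automatic once each $x_j\in K_j$, and that the reverse implication follows by examining which points of $C_{m-1}$ have the required product structure. The degenerate case $\mathrm{int}(K_j)=\emptyset$ for some $j$ makes both sides of the $m$-block inequality equal to $0$, so it is handled separately and trivially.
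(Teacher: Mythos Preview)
Your approach is correct and essentially identical to the paper's: both proceed by induction on $m$, writing $C_m=\mathrm{conv}\big((C_{m-1}\times\{x_m\})\cup(\{(x_1,\dots,x_{m-1})\}\times K_m)\big)$, applying the two-block inequality, and then invoking the inductive hypothesis on $\vol{C_{m-1}}$. The only difference is that the paper takes the two-block case $m=2$ (inequality and equality characterization) as known, citing \cite[Lemma~2.2]{AAGJV}, whereas you sketch its proof directly via the Beta-integral/slicing argument; the propagation of the equality conditions through the induction is then done the same way in both.
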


\begin{proof}
    We prove it by induction on $m\geq 2$. The case $m=2$ is exactly Lemma 2.2 in \cite{AAGJV}. In order to show the inequality for an arbitrary $m>2$, observe that 
    \[
    \begin{split}
        & \vol{\mathrm{conv}\left(\left\{  \{x_1\}\times\cdots\times\{x_{j-1}\}\times K_j\times\{x_{j+1}\}\times\cdots\times\{x_m\}:j=1,\dots,m\right\}\right) } \\
        & = \vol{\mathrm{conv}(\{Q\times\{x_m\},(x_1,\dots,x_{m-1})\times K_m\})} \\
        & \geq \frac{(i_1+\cdots+i_{m-1})!i_m!}{(i_1+\cdots+i_m)!} \vol{Q}\vol{K_m},
    \end{split}
    \]
    where above we have applied the case $m=2$ to $K_m$ and $Q\in\mathcal K^{i_1+\cdots+i_{m-1}}$, with
    \[
    Q:=\mathrm{conv}\left(\left\{  \{x_1\}\times\cdots\times\{x_{j-1}\}\times K_j\times\{x_{j+1}\}\times\cdots\times\{x_{m-1}\}:j=1,\dots,m-1\right\}\right).
    \]
    Applying now the induction step to $\vol{Q}$ we thus get that
    \[
    \begin{split}
     & \frac{(i_1+\cdots+i_{m-1})!i_m!}{(i_1+\cdots+i_m)!} \vol{Q}\vol{K_m} \\
     & \geq \frac{(i_1+\cdots+i_{m-1})!i_m!}{(i_1+\cdots+i_m)!} \frac{\prod_{j=1}^{m-1}i_j!}{(i_1+\cdots+i_{m-1})!} \left(\prod_{j=1}^{m-1}\vol{K_j}\right)\vol{K_m} \\
     & = \frac{\prod_{j=1}^m i_j!}{(i_1+\cdots+i_m)!} \prod_{j=1}^m\vol{K_j},
     \end{split}
    \]
    as desired.

    We now show the case of equality. Equality would mean that there is equality on each inequality above. Notice that using the case of equality when $m=2$ we would have that either $(x_1,\dots,x_{m-1})\in Q$ and $x_m\in K_m$, or either $\mathrm{int}(Q)=\emptyset$ (i.e.~$\mathrm{int}(K_j)=\emptyset$ for some $j=1,\dots,m-1$) or $\mathrm{int}(K_m)=\emptyset$. Moreover, using the case of equality in the induction step, we would have that either $x_j\in K_j$, $j=1,\dots,m-1$, or $\mathrm{int}(K_j)=\emptyset$, for some $j=1,\dots,m-1$. Mixing all properties we obtain that either $\mathrm{int}(K_j)=\emptyset$, for some $j=1,\dots,m$, or $x_j\in K_j$, for every $j=1,\dots,m$, concluding the proof.
\end{proof}

We now need to introduce a convolution operator of log-concave functions that was already used in \cite{AAGJV} for two functions. For every $f_j\in\mathcal F(\mathbb R^n)$, $j=1,\dots,m$, let 
\[
\overline{\bigstar}_{j=1}^mf_j(z)=\sup \left\{\prod_{j=1}^mf_j(x_j)^{\lambda_j}: z=\sum_{j=1}^m\lambda_jx_j,\,\sum_{j=1}^m\lambda_j=1, \lambda_j\geq 0,\,j=1,\dots,m \right\}
\]
Notice that the operator $\overline{\bigstar}$ has the following properties.
\begin{propo}\label{prop:elementaryProp}
    Let $f,f_j\in\mathcal F(\mathbb R^n)$, $K_j\in\mathcal K^n$, $j=1,\dots,m$. Then
    \begin{enumerate}
        \item[$(i)$] $\mathrm{cl}\big(\mathrm{supp}(\overline{\bigstar}_{j=1}^mf_j)\big) = \mathrm{cl}\left(\mathrm{conv}(\{\mathrm{supp}(f_j):j=1,\dots,m\})\right)$.
        \item[$(ii)$] $\overline{\bigstar}_{j=1}^m\chi_{K_j}=\chi_Q$, where $Q=\mathrm{conv}(\{K_j:j=1,\dots,m\})$.
        \item[$(iii)$] If $f_j(x) \leq f(x)$, for every $x\in\mathbb R^n$ and every $j=1,\dots,m$, then
        \[
        \overline{\bigstar}_{j=1}^mf_j(x) \leq f(x), 
        \]
        for every $x\in\mathbb R^n$.
    \end{enumerate}
\end{propo}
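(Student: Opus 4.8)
The plan is to derive all three properties directly from the definition of $\overline{\bigstar}$ together with the elementary fact that the positivity set $D_j:=\{x\in\mathbb R^n:f_j(x)>0\}$ of a log-concave function is convex, with $\mathrm{supp}(f_j)=\mathrm{cl}(D_j)$; throughout I use the conventions $0^0=1$ and $0^\lambda=0$ for $\lambda>0$.

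I would start with $(iii)$, which is immediate: for any $z$ and any admissible representation $z=\sum_{j=1}^m\lambda_jx_j$ (so $\sum_j\lambda_j=1$, $\lambda_j\ge0$), iterating the log-concavity of $f$ gives $f(z)\ge\prod_{j=1}^mf(x_j)^{\lambda_j}\ge\prod_{j=1}^mf_j(x_j)^{\lambda_j}$, where the last inequality uses $f_j\le f$ and monotonicity of $t\mapsto t^{\lambda_j}$; taking the supremum over all such representations yields $f(z)\ge\overline{\bigstar}_{j=1}^mf_j(z)$.

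For $(i)$, I would first prove the equality of positivity sets $\{\overline{\bigstar}_{j=1}^mf_j>0\}=\mathrm{conv}(D_1\cup\cdots\cup D_m)$ and then pass to closures, using that $\mathrm{cl}(\mathrm{conv}(\bigcup_j\mathrm{cl}(D_j)))=\mathrm{cl}(\mathrm{conv}(\bigcup_jD_j))$. The inclusion ``$\subseteq$'' is clear: if $\overline{\bigstar}_{j=1}^mf_j(z)>0$ then some representation has positive product, which forces $x_j\in D_j$ for every $j$ with $\lambda_j>0$, so $z$ is a convex combination of points of $\bigcup_jD_j$. For ``$\supseteq$'', given $z\in\mathrm{conv}(\bigcup_jD_j)$ I would group the points of a convex representation of $z$ according to which $D_j$ they lie in and use the convexity of each $D_j$ to replace each group by a single point of $D_j$; the resulting admissible representation has positive product, hence $\overline{\bigstar}_{j=1}^mf_j(z)>0$.

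Finally, $(ii)$ is the same computation specialized to indicators: here $D_j=K_j$, so the argument in $(i)$ already gives $\{\overline{\bigstar}_{j=1}^m\chi_{K_j}>0\}=\mathrm{conv}(\bigcup_jK_j)=Q$, and it remains only to observe that the value on $Q$ is exactly $1$, since each factor $\chi_{K_j}(x_j)^{\lambda_j}\le 1$ forces $\overline{\bigstar}_{j=1}^m\chi_{K_j}\le 1$, while for $z\in Q$ a representation with $x_j\in K_j$ (for $\lambda_j>0$) attains the product $1$. No step here is genuinely hard; the only care needed is with the conventions for $0^\lambda$ and with not relying on the supremum being attained in general --- for $(i)$ and $(ii)$ it suffices that the supremum is positive (resp.\ equal to $1$) precisely when a single admissible representation achieves a positive (resp.\ unit) value, which the regrouping step, powered by the convexity of the $D_j$, guarantees.
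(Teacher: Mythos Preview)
Your proof is correct and follows the same approach as the paper: your argument for $(iii)$ is essentially identical to the paper's, while for $(i)$ and $(ii)$ the paper simply declares them ``obvious'' without further details, so your regrouping-by-$D_j$ argument merely spells out what the authors leave implicit.
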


\begin{proof}
    The proof of $(i)$ and $(ii)$ are obvious. For the proof of $(iii)$, we simply notice that if $x,x_j\in \mathbb R^n$, $\lambda_j\geq 0$, $j=1,\dots,m$, such that $\sum_{j=1}^m\lambda_j=1$, and $x=\sum_{j=1}^m\lambda_jx_j$, then the log-concavity of $f$ implies that
    \[
    \prod_{j=1}^mf_j(x_j)^{\lambda_j} \leq \prod_{j=1}^mf(x_j)^{\lambda_j} \leq f\left(\sum_{j=1}^m\lambda_jx_j\right)=f(x),
    \]
    and thus, taking the supremum over every such $x_j$ and $\lambda_j$, $j=1,\dots,m$, fulfilling the conditions described above, concludes $(iii)$.
\end{proof}

Next result is a functional inequality which provides a sharp lower bound for the integral of the convolution operator $\overline{\bigstar}$, and that will be crucial to show Theorem \ref{thm:FunctionalRSscoverings}. Let us also remember that from now on, we postpone the proofs of the characterization of the equality cases to Section \ref{sec:ProofsOnEqualities}.
\begin{lemma}\label{lem:OperatorLowerBound}
    Let $f_j\in\mathcal F(\mathbb R^{i_j})$, $j=1,\dots,m$, and let us define $F_j(x_1,\dots,x_m):=f_j(x_j)\chi_{\{0\}}(x_1,\dots,x_{j-1},x_{j+1},\dots,x_m)$, for every $x_j\in\mathbb R^{i_j}$, $j=1,\dots,m$. Then,
    \begin{equation}\label{eq:OperatorLowerBound}
        \max_{j=1,\dots,m} \|f_j\|_\infty^{m-1} \int_{\mathbb R^{i_1+\cdots+i_m}} \overline{\bigstar}_{j=1}^m F_j(x)dx 
        \geq \frac{\prod_{j=1}^m i_j!}{\left(\sum_{j=1}^m i_j\right)!} \prod_{j=1}^m\int_{\mathbb R^{i_j}}f_j(x_j)dx_j.  
    \end{equation}
    Moreover, equality holds above if and only if $f_j=\|f\|_\infty\chi_{K_j}$, where $0\in K_j\in\mathcal K^{i_j}$, $j=1,\dots,m$, and $\|f_1\|_\infty=\cdots=\|f_m\|_\infty$. 
\end{lemma}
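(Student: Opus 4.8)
The plan is to argue by induction on $m\ge 2$, in parallel with the proof of Lemma~\ref{lem:generalizationRS}, the base case $m=2$ being the functional Rogers--Shephard type inequality of \cite{AAGJV} (see \cite[Thm.~1.1]{AAGJV}, in its two--function form). Write $N:=i_1+\dots+i_m$ and $N':=i_1+\dots+i_{m-1}$. The inductive step rests on the fact that $\overline{\bigstar}$ is associative: grouping weights $\lambda_1,\dots,\lambda_m\ge 0$ with $\sum_j\lambda_j=1$ as $\lambda_j=(1-\lambda_m)\nu_j$ for $j<m$, with $\nu_1,\dots,\nu_{m-1}\ge 0$ summing to $1$, one checks directly from the definition that, up to closures of supports (which is harmless for integration),
\[
\overline{\bigstar}_{j=1}^m F_j \;=\; \overline{\bigstar}\!\left(\overline{\bigstar}_{j=1}^{m-1}F_j,\;F_m\right),
\]
where on the right $\mathbb R^N$ is seen as $\mathbb R^{N'}\times\mathbb R^{i_m}$ and the two--fold operator is used. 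The only point requiring attention here is the treatment of vanishing weights, handled with the same convention used in the definition of $\overline{\bigstar}$.

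Next I would make the first factor explicit. Since $F_1,\dots,F_{m-1}$ are supported inside $\mathbb R^{N'}\times\{0\}$, so is their $\overline{\bigstar}$, and on this subspace it agrees with $G:=\overline{\bigstar}_{j=1}^{m-1}F'_j$, where $F'_j$ is the analogue of $F_j$ built inside $\mathbb R^{N'}$; thus $\overline{\bigstar}_{j=1}^{m-1}F_j(x',x_m)=G(x')\chi_{\{0\}}(x_m)$ on $\mathbb R^N$, so the left--hand side of the display above is exactly the object in \eqref{eq:OperatorLowerBound} for the two functions $G$ on $\mathbb R^{N'}$ and $f_m$ on $\mathbb R^{i_m}$. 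From the definition of $\overline{\bigstar}$ (putting all weight on the index realizing the maximum, and using $\prod_j c_j^{\lambda_j}\le\max_j c_j$) one gets $\|G\|_\infty=\max_{j\le m-1}\|f_j\|_\infty$, hence $\max(\|G\|_\infty,\|f_m\|_\infty)=\max_{j\le m}\|f_j\|_\infty$; I would also record, as a standard fact, that $\overline{\bigstar}$ preserves integrable log--concavity, so that $G\in\mathcal F(\mathbb R^{N'})$ and the base case indeed applies to $G$ and $f_m$.

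With these ingredients, chaining the two estimates concludes. The base case $m=2$ applied to $G$ and $f_m$ gives
\[
\max_{j\le m}\|f_j\|_\infty\int_{\mathbb R^{N}}\overline{\bigstar}_{j=1}^m F_j\;\ge\;\frac{N'!\,i_m!}{N!}\int_{\mathbb R^{N'}}G\cdot\int_{\mathbb R^{i_m}}f_m,
\]
while the induction hypothesis applied to $f_1,\dots,f_{m-1}$ gives
\[
\Bigl(\max_{j\le m-1}\|f_j\|_\infty\Bigr)^{m-2}\int_{\mathbb R^{N'}}G\;\ge\;\frac{\prod_{j=1}^{m-1}i_j!}{N'!}\prod_{j=1}^{m-1}\int_{\mathbb R^{i_j}}f_j.
\]
Multiplying these, using $\max_{j\le m-1}\|f_j\|_\infty\le\max_{j\le m}\|f_j\|_\infty$, and simplifying, yields exactly \eqref{eq:OperatorLowerBound}; the equality characterization is postponed to Section~\ref{sec:ProofsOnEqualities}. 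The genuinely delicate step is the associativity identity together with the bookkeeping of zero weights and support closures, plus the (minor) task of matching the precise form of the $m=2$ statement in \cite{AAGJV}. By contrast, a superlevel--set approach, using $\{\overline{\bigstar}_{j=1}^m F_j>t\}\supseteq\mathrm{conv}$ of the sets $\{f_j>t\}$ embedded in the coordinate blocks together with Lemma~\ref{lem:generalizationRS} at each level $t$, only produces $\int_0^\infty\prod_j\mathrm{vol}(\{f_j>t\})\,dt$ on the right, which is strictly weaker than $\prod_j\int f_j$, so the inductive route is the one to follow.
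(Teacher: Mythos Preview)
Your inductive argument is essentially correct and, modulo the minor bookkeeping you flag (associativity of $\overline{\bigstar}$ at vanishing weights, and checking that $G=\overline{\bigstar}_{j<m}F'_j$ is integrable log-concave --- this follows since each $f_j$, being integrable log-concave, is dominated by $C_je^{-c_j|x|}$, whence $G(z)\le(\max_jC_j)e^{-(\min_jc_j)\|z\|_1}$), it goes through cleanly. It is, however, genuinely different from the paper's route.

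The paper does \emph{not} argue by induction; it gives a direct proof for all $m$ via the superlevel-set approach you dismiss. Your claim that this approach ``only produces $\int_0^\infty\prod_j\mathrm{vol}(\{f_j>t\})\,dt$, which is strictly weaker than $\prod_j\int f_j$'' is mistaken: you have overlooked the factor $A^{m-1}$ (with $A=\max_j\|f_j\|_\infty$) sitting on the left of \eqref{eq:OperatorLowerBound}. After normalizing, the layer-cake plus Lemma~\ref{lem:generalizationRS} give
\[
\tfrac1A\int\overline{\bigstar}_jF_j\;\ge\;\frac{\prod_ji_j!}{N!}\int_0^1\prod_j\mathrm{vol}\{f_j\ge tA\}\,dt
=\frac{\prod_ji_j!}{N!}\int_{\mathbb R^N}\min_j\frac{f_j(x_j)}{A}\,dx,
\]
and since each $f_j/A\le1$ one has $\min_j(f_j/A)\ge\prod_j(f_j/A)$, so the right side is $\ge\frac{\prod_ji_j!}{N!}\,A^{-m}\prod_j\int f_j$; multiplying through by $A^m$ yields exactly \eqref{eq:OperatorLowerBound}. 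Thus the superlevel-set method is not weaker --- it is the paper's proof.

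As for what each approach buys: yours packages everything into the $m=2$ black box from \cite{AAGJV} and is economical once that is granted, but it uses an external result and makes the equality analysis (Section~\ref{sec:ProofsOnEqualities}) less transparent, since one must unwind equality in both the base case (for $G$, not a primitive $f_j$) and the inductive step. The paper's direct argument is self-contained given the elementary Lemma~\ref{lem:generalizationRS}, and it exposes exactly the three inequalities $(i)$--$(iii)$ whose equality cases drive the characterization. A minor point: the two-function input you need is the $m=2$ case of the present lemma (a lemma in \cite{AAGJV}), not literally \cite[Thm.~1.1]{AAGJV}, which is stated for a single $f$ restricted to two subspaces.
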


\begin{proof}
    Let $z_j,x_j\in\mathbb R^{i_j}$ and $\lambda_j\geq 0$, $j=1,\dots,m$ be such that $z=(z_1,\dots,z_m)=\sum_{j=1}^m\lambda_jx_j$ with $\sum_{j=1}^m\lambda_j=1$. Then
    \[
    \prod_{j=1}^m F_j(x_j)^{\lambda_j} = \prod_{j=1}^m f_j\left(\frac{z_j}{\lambda_j}\right)^{\lambda_j} \geq \min\left\{f_j\left(\frac{z_j}{\lambda_j}\right):j=1,\dots,m\right\},
    \]
    and thus
    \begin{equation}\label{eq:Ineq1Lemma}
        \overline{\bigstar}_{j=1}^m F_j(z) \geq \sup_{\sum_{j=1}^m\lambda_j=1,\,\lambda_j\geq 0} \min\left\{f_j\left(\frac{z_j}{\lambda_j}\right):j=1,\dots,m\right\}.
        \end{equation}

    Let $A:=\|\overline{\bigstar}_{j=1}^m F_j\|_\infty=\max\{\|f_j\|_\infty:j=1,\dots,m\}$. Then,
    \[
    \begin{split}
       & \frac1A \int_{\mathbb R^{i_1+\cdots+i_m}} \overline{\bigstar}_{j=1}^m F_j(z)dz \\
       & = \int_0^1\vol{\left\{(z_1,\dots,z_m):\sup_{\sum_{j=1}^m\lambda_j=1}\prod_{j=1}^m f_j\left(\frac{z_j}{\lambda_j}\right)^{\lambda_j} \geq tA\right\} } dt \\
       & \underset{(i)}{\geq}  \int_0^1\vol{\left\{(z_1,\dots,z_m):\sup_{\sum_{j=1}^m\lambda_j=1}\min\left\{ f_j\left(\frac{z_j}{\lambda_j}\right):j=1,\dots,m \right\}\geq tA\right\} } dt \\
       & = \int_0^1 \vol{ \mathrm{conv}_{j=1,\dots,m}\left( \left\{ \{(0,\dots,0,x_j,0,\dots,0)\in\mathbb R^{i_1+\cdots+i_m}:f_j(x_j) \geq tA\} \right\}\right) } dt \\
       & \underset{(ii)}{\geq} \frac{\prod_{j=1}^m i_j!}{(i_1+\cdots+i_m)!} \int_0^1 \prod_{j=1}^m \vol{ \{(0,\dots,0,x_j,0,\dots,0)\in\mathbb R^{i_1+\cdots+i_m}:f_j(x_j) \geq tA\}  } dt,
    \end{split}
    \]
    where inequality $(i)$ follows from \eqref{eq:Ineq1Lemma} and inequality $(ii)$ follows from Lemma \ref{lem:generalizationRS}. Evidently, the last line above can be rewritten as
    \begin{equation}\label{eq:Ineq2Lemma}
    \frac{\prod_{j=1}^m i_j!}{(i_1+\cdots+i_m)!} \int_0^1 \prod_{j=1}^m \left( \int_{\mathbb R^{i_j}}\chi_{\{f_j(x_j)\geq tA\}}(x_j)dx_j \right)dt.
    \end{equation}
    Let us notice that the term within the integral above equals to
    \[
    \begin{split}
        & \int_0^1\left(  \int_{\mathbb R^{i_1}\times \cdots\times\mathbb R^{i_m}} \chi_{\{f_1(x_1)\geq tA\}}(x_1)\cdots\chi_{\{f_m(x_m)\geq tA\}}(x_m)dx_1\cdots dx_m \right)dt  \\
        & = \int_0^1\left(  \int_{\mathbb R^{i_1}\times \cdots\times\mathbb R^{i_m}} \chi_{\{\min\{f_1(x_1),\dots,f_m(x_m)\}\geq tA\}}(x_1,\dots,x_m) dx_1\cdots dx_m \right)dt \\
        & = \int_{\mathbb R^{i_1}\times \cdots\times\mathbb R^{i_m}} \left( \int_0^1 \chi_{\{\min\{f_1(x_1),\dots,f_m(x_m)\}\geq tA\}}(t) dt\right) dx_1\cdots dx_m \\
        & = \int_{\mathbb R^{i_1}\times \cdots\times\mathbb R^{i_m}} \min\left\{\frac{f_1(x_1)}{A},\dots,\frac{f_m(x_m)}{A}\right\} dx_1\cdots dx_m,
    \end{split}
    \]
    where above we have changed the order of the integrals by Fubini's theorem. Therefore \eqref{eq:Ineq2Lemma} turns out to be
    \begin{equation*}
    \begin{split}
    & \frac{\prod_{j=1}^m i_j!}{(i_1+\cdots+i_m)!} \int_{\mathbb R^{i_1}\times \cdots\times\mathbb R^{i_m}} \min\left\{\frac{f_j(x_j)}{A}:j=1,\dots,m\right\} dx_1\cdots dx_m \\
    & \underset{(iii)}{\geq} \frac{\prod_{j=1}^m i_j!}{(i_1+\cdots+i_m)!} \prod_{j=1}^m\int_{\mathbb R^{i_j}} \frac{f_j(x_j)}{A}dx_j,
    \end{split}
    \end{equation*}
    where inequality $(iii)$ follows from the fact that $\frac{f_j(x_j)}{A}\leq 1$ for every $j=1,\ldots,m$. This 
    concludes the proof of the inequality.\end{proof}
    
We are now able to show the inequality of Theorem \ref{thm:FunctionalRSscoverings}. 
\begin{proof}[Proof of Theorem \ref{thm:FunctionalRSscoverings}]
    Since the $s$-cover $(\sigma_1,\dots,\sigma_m)$ of $[n]$ is $1$-reducible, let \\$(\sigma_{1_1},\dots,\sigma_{1_{i_1}},\dots,\sigma_{s_1},\dots,\sigma_{s_{i_s}})$ be a reordering of the former cover such that\\ 
    $(\sigma_{j_1},\dots,\sigma_{j_{i_j}})$ is a $1$-cover of $[n]$, for every $j=1,\dots,s$. Let us consider the functions
    \[
    F_{j_\ell}(x_{j_1},\dots,x_{j_{i_j}}) := f(x_{j_\ell}) \chi_{\{0\}} (x_{j_1},\dots,x_{j_{\ell-1}},x_{j_{\ell+1}},\dots,x_{j_{i_j}}),
    \]
    where $x_{j_\ell}\in\mathbb R^{|\sigma_{j_\ell}|}$, for every $j=1,\dots,s$ and every $\ell=1,\dots,i_j$. Since 
    \[
    \max\left\{\|f|_{H_{\sigma_{j_\ell}}}\|_\infty:\ell=1,\dots,i_j\right\} \leq \|f\|_\infty
    \]
    for every $j=1,\dots,s$, by $(iii)$ of Proposition \ref{prop:elementaryProp} and Lemma \ref{lem:generalizationRS} we then get that 
    \[
    \begin{split}
        \|f\|_\infty^{m-s} \left(\int_{\mathbb R^n}f(z)dz\right)^s & = \prod_{j=1}^s \left( \|f\|_\infty^{i_j-1} \int_{\mathbb R^n} f(z)dz\right) \\
        & \geq \prod_{j=1}^s\left(  \max_{\ell=1,\dots,i_j}\{ \|f|_{H_{\sigma_{j_\ell}}}\|_\infty\}^{i_j-1} \int_{\mathbb R^n} \overline{\bigstar}_{\ell=1}^{i_j}F_{j_\ell}(z)dz \right) \\
        & \geq \prod_{j=1}^s \left( \frac{\prod_{\ell=1}^{i_j}|\sigma_{j_\ell}|!}{(\sum_{\ell=1}^{i_j}|\sigma_{j_\ell}|)!} \prod_{\ell=1}^{i_j} \int_{H_{\sigma_{j_\ell}}} f(x)dx \right) \\
        & = \frac{\prod_{j=1}^m|\sigma_j|!}{n!^s} \prod_{j=1}^m \int_{H_{\sigma_j}} f(x)dx,
    \end{split}
    \]
    where in the last equality we have also used the fact that $\sum_{\ell=1}^{i_j}|\sigma_{j_\ell}|=n$, for every $1$-cover $(\sigma_{j_1},\dots,\sigma_{j_{i_j}})$ of $[n]$, for every $j=1,\dots,s$. Thus, we conclude the proof of the inequality.
\end{proof}

We are now able to derive the inequality of Theorem \ref{thm:sCoverDecom1Covers}. 
\begin{proof}[Proof of Theorem \ref{thm:sCoverDecom1Covers}]
    Since $(\sigma_1,\dots,\sigma_m)$ is a $s$-cover of $\sigma$, then $(\sigma\setminus\sigma_1,\dots,\sigma\setminus\sigma_m)$ is a $(m-s)$-cover of $\sigma$. Let us consider
    \[
    f:H_\sigma\rightarrow[0,+\infty)\quad\text{were}\quad f(x)=\vol{K\cap(x+H_\sigma^\bot)}.
    \]
    By the Brunn Concavity Principle, $f$ is a $\frac{1}{n-|\sigma|}$-concave function, and thus, $f$ is log-concave too. Since $K$ is compact, we actually have that $f\in\mathcal F(H_\sigma)$. Now applying Theorem \ref{thm:FunctionalRSscoverings} to $f$ and the $1$-reducible $(m-s)$-cover $(\sigma\setminus\sigma_1,\dots,\sigma\setminus\sigma_m)$ of $\sigma$, we get that
    \[
    \begin{split}
        \max_{x\in H_\sigma}\vol{K\cap(x+H_\sigma^\bot)}^s \vol{K}^{m-s} & = \|f\|_\infty^s \left(\int_{H_\sigma}f(x)dx\right)^{m-s} \\
        & \geq \frac{\prod_{j=1}^m(|\sigma|-|\sigma_j|)!}{|\sigma|!^{m-s}} \prod_{j=1}^m\int_{H_{\sigma\setminus\sigma_j}} f(x)dx \\
        & = \frac{\prod_{j=1}^m(|\sigma|-|\sigma_j|)!}{|\sigma|!^{m-s}} \prod_{j=1}^m \vol{K\cap H_{\sigma_j}^\bot},
    \end{split}
    \]
    which concludes the proof of the inequality.
\end{proof}

As a consecuence of Theorem \ref{thm:FunctionalRSscoverings} we can prove a functional version of Theorem \ref{thm:sCoverDecom1Covers} for log-concave functions. In order to prove it, we are using an important result, the  Prékopa-Leindler inequality (see \cite{Le72,Pr71}), which states that for $f,\, g,\, h:\mathbb{R}^n\to[0,\infty)$ integrable functions such that $h(\lambda x+(1-\lambda)y)\geq f(x)^\lambda g(y)^{1-\lambda}$ for every $x,y\in\mathbb{R}^n$ and some $\lambda\in[0,1]$ we have that
\begin{equation}\label{eq:PrekLeind}
\int_{\mathbb{R}^n}h(x)dx\geq\left(\int_{\mathbb{R}^n}f(x)dx\right)^\lambda\left(\int_{\mathbb{R}^n}g(x)dx\right)^{1-\lambda}.
\end{equation}
  
\begin{cor}\label{cor:funcNewIneq}
Let $f\in\mathcal F(\mathbb R^n)$ and let
$(\sigma\setminus \sigma_1,\ldots,\sigma\setminus\sigma_m)$ a $1$-reducible $(m-s)$-cover of $\sigma\subset [n]$. Then,
\[
\left(\max_{x\in\mathbb{R}^n}\int_{x+H_\sigma^\bot}f(y)dy\right)^{s}\left(\int_{\mathbb{R}^n}f(x)dx\right)^{m-s}\geq \frac{\prod_{j=1}^m(|\sigma|-|\sigma_j|)!}{|\sigma|!^{m-s}} \prod_{j=1}^m\int_{H_{\sigma_j}^\bot}f(x)dx.
\]
\end{cor}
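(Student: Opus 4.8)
The plan is to reduce the statement to Theorem \ref{thm:FunctionalRSscoverings} by passing to the marginal of $f$ onto $H_\sigma$, mimicking on the functional level the passage from Theorem \ref{thm:FunctionalRSscoverings} to Theorem \ref{thm:sCoverDecom1Covers}. Concretely, I would define $g\colon H_\sigma\to[0,+\infty)$ by $g(x):=\int_{H_\sigma^\bot}f(x+y)\,dy$; equivalently $g(x)=\int_{x+H_\sigma^\bot}f(y)\,dy$. Since $\int_{x+H_\sigma^\bot}f$ depends only on $P_{H_\sigma}x$, one immediately gets $\max_{x\in\mathbb R^n}\int_{x+H_\sigma^\bot}f(y)\,dy=\|g\|_\infty$, and by Fubini $\int_{H_\sigma}g(x)\,dx=\int_{\mathbb R^n}f(x)\,dx$.

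The first point to settle is that $g\in\mathcal F(H_\sigma)$. Integrability and convexity of the support are inherited from $f$, so the only substantive claim is log-concavity of $g$. Here I would fix $x_0,x_1\in H_\sigma$ and $\lambda\in[0,1]$ and apply the Pr\'ekopa-Leindler inequality \eqref{eq:PrekLeind} on $H_\sigma^\bot$ to the triple $f(x_0+\cdot)$, $f(x_1+\cdot)$, $f((1-\lambda)x_0+\lambda x_1+\cdot)$; the pointwise hypothesis of \eqref{eq:PrekLeind} is exactly the log-concavity of $f$, so integrating yields $g((1-\lambda)x_0+\lambda x_1)\geq g(x_0)^{1-\lambda}g(x_1)^\lambda$. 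This is the functional Brunn principle, and it plays here the role played by the Brunn Concavity Principle in the proof of Theorem \ref{thm:sCoverDecom1Covers}.

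Next I would record the one combinatorial identity needed to match the right-hand sides. Since $\sigma_j\subseteq\sigma$, we have $(\sigma\setminus\sigma_j)\cup([n]\setminus\sigma)=[n]\setminus\sigma_j$, hence $H_{\sigma\setminus\sigma_j}\oplus H_\sigma^\bot=H_{[n]\setminus\sigma_j}=H_{\sigma_j}^\bot$ for every $j=1,\dots,m$. Therefore, again by Fubini,
\[
\int_{H_{\sigma\setminus\sigma_j}}g(x)\,dx=\int_{H_{\sigma\setminus\sigma_j}\oplus H_\sigma^\bot}f(z)\,dz=\int_{H_{\sigma_j}^\bot}f(z)\,dz.
\]
Finally, I would apply Theorem \ref{thm:FunctionalRSscoverings} to $g\in\mathcal F(H_\sigma)\cong\mathcal F(\mathbb R^{|\sigma|})$ and to the $1$-reducible $(m-s)$-cover $(\sigma\setminus\sigma_1,\dots,\sigma\setminus\sigma_m)$ of $\sigma$ (a cover of the ground set $\sigma$, which plays the role of $[n]$ in that theorem, with parameters $m$ and $m-s$ in place of $m$ and $s$), obtaining
\[
\|g\|_\infty^{s}\Big(\int_{H_\sigma}g(x)\,dx\Big)^{m-s}\ \geq\ \frac{\prod_{j=1}^m(|\sigma|-|\sigma_j|)!}{|\sigma|!^{m-s}}\prod_{j=1}^m\int_{H_{\sigma\setminus\sigma_j}}g(x)\,dx,
\]
and substituting $\|g\|_\infty=\max_{x\in\mathbb R^n}\int_{x+H_\sigma^\bot}f$, $\int_{H_\sigma}g=\int_{\mathbb R^n}f$, and $\int_{H_{\sigma\setminus\sigma_j}}g=\int_{H_{\sigma_j}^\bot}f$ gives exactly the claimed inequality.

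I do not anticipate a genuine obstacle: the argument is a routine marginalisation transfer. The only step requiring more than a one-line remark is the log-concavity of the marginal $g$, which is precisely why the Pr\'ekopa-Leindler inequality was recalled just before the statement; everything else is bookkeeping with Fubini and the subspace identity $H_{\sigma\setminus\sigma_j}\oplus H_\sigma^\bot=H_{\sigma_j}^\bot$.
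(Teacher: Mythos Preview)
Your proposal is correct and follows essentially the same route as the paper: define the marginal $g$ (the paper calls it $F$) on $H_\sigma$, use Pr\'ekopa--Leindler to show it is log-concave, and then apply Theorem \ref{thm:FunctionalRSscoverings} to $g$ and the $(m-s)$-cover $(\sigma\setminus\sigma_1,\dots,\sigma\setminus\sigma_m)$, recovering the right-hand side via Fubini and the identity $H_{\sigma\setminus\sigma_j}\oplus H_\sigma^\bot=H_{\sigma_j}^\bot$. If anything, your write-up is slightly more explicit than the paper's about this last subspace identity and the translation of the parameters $(m,s)\mapsto(m,m-s)$.
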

\begin{proof}
    Let us consider the function $F:H_\sigma\to[0,\infty)$ given by 
    \[
    F(x)=\int_{x+H_\sigma^\bot}f(y)dy
    \]
    for any $x\in H_\sigma$. Using the log-concavity of the function $f$ we obtain that 
    \[
    \begin{split}
      &f(\lambda x+(1-\lambda)y+t)\chi_{\lambda x+(1-\lambda)y+H_\sigma^\bot}(\lambda x+(1-\lambda)y+t)\\
      &\geq \left(f(x+t)\chi_{x+H_\sigma\bot}(x+t)\right)^\lambda\left(f(x+t)\chi_{y+H_\sigma^\bot}(y+t)\right)^{1-\lambda},
    \end{split}
    \]
    and using Prékopa-Leindler inequality \eqref{eq:PrekLeind} then
    \[
    \begin{split}
        F(\lambda x+(1-\lambda)y)&=\int_{\mathbb{R}^n}f(z)\chi_{\lambda x+(1-\lambda)y+H_\sigma^\bot}(z)dz\\
        &\geq\left(\int_{\mathbb{R}^n}f(z)\chi_{x+H_\sigma^\bot}(z)\right)^\lambda\left(\int_{\mathbb{R}^n}f(z)\chi_{y+H_\sigma^\bot}(z)\right)^{1-\lambda}\\
        &=F(x)^\lambda F(y)^{1-\lambda},
    \end{split}
    \]
    i.e.~$F$ is a log-concave function too.
    
    Finally, applying Theorem \ref{thm:FunctionalRSscoverings} to $F$ and the $1$-reducible $(m-s)$-cover $(\sigma\setminus\sigma_1,\ldots,\sigma\setminus\sigma_m) $ of $\sigma$ we get that
    \[
    \begin{split}
        \left(\max_{x\in\mathbb{R}^n}\int_{x+H_\sigma^\bot}f(y)dy\right)^{s}\left(\int_{\mathbb{R}^n}f(x)dx\right)^{m-s}&=\|F\|_\infty^s\left(\int_{H_\sigma}F(x)dx\right)^{m-s}\\
        &\geq \frac{\prod_{j=1}^m(|\sigma|-|\sigma_j|)!}{|\sigma|!^{m-s}} \prod_{j=1}^m\int_{H_{\sigma\setminus\sigma_j}} F(x)dx\\
        &=\frac{\prod_{j=1}^m(|\sigma|-|\sigma_j|)!}{|\sigma|!^{m-s}} \prod_{j=1}^m\int_{H_{\sigma_j}^\bot} f(x)dx,
    \end{split}
    \]
    which concludes the proof.
\end{proof}
 Notice that if we take $f=\chi_K$, for some convex body $K$, then Corollary \ref{cor:funcNewIneq} recovers the geometric inequality of Theorem \ref{thm:sCoverDecom1Covers}.

\section{Proofs of the equality cases}\label{sec:ProofsOnEqualities}

We start this section by proving the following lemma that generalizes Lemma 2.3 in \cite{AAGJV} to arbitrary number of functions. We prove it in this section because the novelty here is specially in the equality case. Let us also mention that in every equality case, the values that every log-concave function $f$ takes on $\mathrm{bd}(\mathrm{supp}(f))$ do not change any result where what we compute is the integral of the function, as long as we preserve the log-concavity of $f$. This is why sometimes we just always replace (even without saying it explicitly) $f$ by the superior limit function (see \cite[Lemma~2.1]{Co06}
\[
\hat{f}(x):=\limsup_{y\rightarrow x,\,y\in\mathrm{int}(\mathrm{supp}(f))}f(y),
\]
defined in $\mathrm{supp}(\hat{f}):=\mathrm{cl}(\mathrm{supp}(f))$. In particular, if $f\in\mathcal F(\mathbb R^n)$ then $\hat{f}\in\mathcal F(\mathbb R^n)$ with $f(x)\leq \hat{f}(x)$, for every $x\in\mathbb R^n$, and $\int_{\mathbb R^n} f(x)dx=\int_{\mathbb R^n} \hat{f}(x)dx$.
\begin{lemma}\label{lem:MinProdgeneralization}
    Let $f_j\in\mathcal F(\mathbb R^{i_j})$ be such that $f_j(x_j)\leq 1$ for every $x_j\in\mathbb R^{i_j}$ and $j=1,\dots,m$. Then
    \[
    \int_{\mathbb R^{i_1+\cdots+i_m}} \min\left\{f_j(x_j):j=1,\dots,m\right\} d(x_1,\dots,x_m) \geq \prod_{j=1}^m\int_{\mathbb R^{i_j}}f_j(x_j)dx_j.
    \]
    Moreover, equality holds above if and only if there exists $\sigma\subset[m]$ with $|\sigma|=m-1$, and some $K_j\in\mathcal K^{i_j}$ with $j\in\sigma$, such that $f_j(x_j)=\chi_{K_j}$, for every $j\in\sigma$.
\end{lemma}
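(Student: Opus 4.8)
The plan is to reduce the statement to a one--dimensional correlation inequality between the distribution functions of the $f_j$'s, following the same layer--cake computation already carried out in the proof of Lemma~\ref{lem:OperatorLowerBound}. For $j=1,\dots,m$ and $t\ge 0$ set $\mu_j(t):=\mathrm{vol}_{i_j}\bigl(\{x_j\in\R^{i_j}:f_j(x_j)>t\}\bigr)$, a non-increasing function on $[0,1]$ (recall $0\le f_j\le 1$) with $\int_0^1\mu_j(t)\,dt=\int_{\R^{i_j}}f_j(x_j)\,dx_j<\infty$, so $\mu_j\in L^1([0,1])$. Using $f_j(x_j)=\int_0^1\chi_{\{f_j(x_j)>t\}}\,dt$ one gets $\min_j f_j(x_j)=\int_0^1\prod_{j=1}^m\chi_{\{f_j(x_j)>t\}}\,dt$, and integrating over $\R^{i_1+\cdots+i_m}$ and swapping integrals by Fubini's theorem (the integrands being nonnegative) gives
\[
\int_{\R^{i_1+\cdots+i_m}}\min_{j}f_j(x_j)\,d(x_1,\dots,x_m)=\int_0^1\prod_{j=1}^m\mu_j(t)\,dt .
\]
If some $f_j\equiv 0$ both sides of the Lemma vanish, so from now on I assume $\int_{\R^{i_j}}f_j>0$, i.e.\ each $\mu_j$ is not identically zero.

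Thus it suffices to prove the following one--dimensional fact: if $g_1,\dots,g_m\in L^1([0,1])$ are nonnegative and non-increasing, then $\int_0^1\prod_j g_j\ge\prod_j\int_0^1 g_j$, with equality (when no $g_j$ is a.e.\ $0$) precisely when at most one of the $g_j$ is non-constant on $(0,1)$. For $m=2$ this is Chebyshev's integral inequality: $\iint_{[0,1]^2}\bigl(g_1(s)-g_1(t)\bigr)\bigl(g_2(s)-g_2(t)\bigr)\,ds\,dt=2\int g_1g_2-2\int g_1\int g_2\ge 0$, the integrand being nonnegative since $g_1,g_2$ are both non-increasing; equality forces the integrand to vanish a.e., which a standard measure-theoretic argument shows happens exactly when $g_1$ or $g_2$ is a.e.\ constant. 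For $m>2$ I would use the chain $\int_0^1\prod_{j=1}^m g_j\ge\bigl(\int_0^1 g_m\bigr)\bigl(\int_0^1\prod_{j<m}g_j\bigr)\ge\prod_{j=1}^m\int_0^1 g_j$, applying the case $m=2$ to the pair $g_m$ and $\prod_{j<m}g_j$ (again non-increasing) and the inductive hypothesis to $g_1,\dots,g_{m-1}$ (note $\int_0^1\prod_{j<m}g_j=\int_{\R^{i_1+\cdots+i_{m-1}}}\min_{j<m}f_j$, so this is literally the Lemma for $m-1$ functions). Equality must then hold in both steps: either $g_m$ is constant on $(0,1)$, and by induction at most one of $g_1,\dots,g_{m-1}$ is non-constant; or $\prod_{j<m}g_j$ is a.e.\ constant, in which case, on the set where the product is positive, taking logarithms reduces matters to the fact that a finite sum of non-increasing functions is a.e.\ constant only if each summand is (while the constant value $0$ is impossible, since it would force some $\int_0^1 g_j=0$), so $g_1,\dots,g_{m-1}$ are all constant on $(0,1)$. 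Either way at most one $g_j$ is non-constant; conversely, if $g_1,\dots,g_{m-1}$ are constant the inequality is obviously an equality.

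Finally I would translate the equality condition back to the $f_j$'s. A non-increasing $g\in L^1([0,1])$ with $\int g>0$ is constant on $(0,1)$, say $\equiv c>0$, if and only if the corresponding level sets satisfy $\mathrm{vol}_{i_j}(\{f_j>t\})=c$ for every $t\in(0,1)$; then $\{f_j>0\}=\bigcup_{t>0}\{f_j>t\}$ and $\{f_j=1\}=\bigcap_{t<1}\{f_j>t\}$ both have volume $c<\infty$, so $\mathrm{vol}_{i_j}(\{0<f_j<1\})=0$ and hence $f_j=\chi_{S_j}$ a.e.\ with $S_j:=\mathrm{supp}(f_j)$ of volume $c>0$; log-concavity of $f_j$ makes $S_j$ convex, and a convex set of finite positive volume is bounded with non-empty interior, so passing to the superior-limit representative $\hat{f}_j$ (which has the same integral and the same level-set volumes) we obtain $\hat{f}_j=\chi_{K_j}$ with $K_j:=\mathrm{cl}(S_j)\in\mathcal K^{i_j}$. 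Conversely, if $f_j=\chi_{K_j}$ with $K_j\in\mathcal K^{i_j}$ then $\mu_j\equiv\mathrm{vol}_{i_j}(K_j)$ on $(0,1)$. Combining everything, equality in the Lemma holds exactly when at most one $\mu_j$ is non-constant on $(0,1)$, i.e.\ when there is $\sigma\subset[m]$ with $|\sigma|=m-1$ and bodies $K_j\in\mathcal K^{i_j}$ $(j\in\sigma)$ with $f_j=\chi_{K_j}$ for $j\in\sigma$ (after passing to $\hat{f}_j$), which is the claimed characterization. I expect the delicate points to be this last translation---extracting convexity of the support from log-concavity and boundedness from integrability---together with the bookkeeping of the two subcases in the inductive equality analysis; the inequality itself is just the one invoked, without its equality case, as step $(iii)$ in the proof of Lemma~\ref{lem:OperatorLowerBound}.
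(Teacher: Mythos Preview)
Your proof is correct and takes a genuinely different route from the paper. The paper argues by induction directly on the integral $\int\min_j f_j$, quoting \cite[Lemma~2.3]{AAGJV} for the base case $m=2$ (including its equality characterization), and for the equality case at level $m$ must unwind the condition that $\min\{f_2,\dots,f_m\}$ is a characteristic function. You instead pass once and for all to the distribution functions $\mu_j(t)=\mathrm{vol}(\{f_j>t\})$ via the layer-cake identity $\int\min_j f_j=\int_0^1\prod_j\mu_j$, and then prove a self-contained Chebyshev inequality $\int_0^1\prod_j g_j\ge\prod_j\int_0^1 g_j$ for non-increasing $g_j$ together with its equality case (at most one $g_j$ non-constant). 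Structurally the induction is the same---split off one factor---but working at the level of monotone one-variable functions makes the equality analysis considerably cleaner: constancy of a monotone function is easy to detect, and the translation ``$\mu_j$ constant on $(0,1)$ $\Leftrightarrow$ $f_j=\chi_{K_j}$'' is immediate. Your approach is fully self-contained (no appeal to \cite{AAGJV}), while the paper's is shorter given that external reference. One small remark: you might note explicitly that for integrable log-concave $f_j$ each $\mu_j(t)$ grows at most polylogarithmically as $t\to 0^+$, so $\int_0^1\prod_j\mu_j<\infty$ and all the Chebyshev identities are valid; otherwise the inequality is trivial and the equality discussion is vacuous.
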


\begin{proof}
We prove the result by induction on $m$. We know that the result is true for $m=2$ by Lemma 2.3 in \cite{AAGJV}. For any $m>2$ we have that
\[
\begin{split}
    & \int_{\mathbb R^{i_1+\cdots+i_m}} \min\left\{f_j(x_j):j=1,\dots,m\right\} d(x_1,\dots,x_m) \\
    & = \int_{\mathbb R^{i_1}\times\mathbb R^{i_2+\cdots+i_m}} \min\left\{f_j(x_1), \min\left\{  f_j(x_j):j=2,\dots,m\right\} \right\} d(x_1,\dots,x_m) \\
    & \geq \int_{\mathbb R^{i_1}} f_1(x_1)dx_1 \int_{\mathbb R^{i_2+\cdots+i_m}} \min\left\{  f_j(x_j):j=2,\dots,m\right\} d(x_2,\dots,x_m) \\
    & \geq \int_{\mathbb R^{i_1}} f_1(x_1)dx_1 \prod_{j=2}^m \int_{\mathbb R^{i_j}} f_j(x_j)dx_j,
\end{split}
\]
which shows the inequality.

If we have equality, then we would have equality on each inequality sign above. On the one hand, the case of $m=2$ says that equality holds if and only if either $f_1(x_1)=\chi_{K_1}$ or $\min\{f_j(x_j):j=2,\dots,m\}=\chi_{C}(x_2,\dots,x_m)$, for some $K_1\in\mathcal K^{i_1}$ or some $C\in\mathcal K^{i_2+\cdots+i_m}$. On the other hand, equality holds by the induction step if and only if there exist $\sigma'\subset\{2,\dots,m\}$ with $|\sigma'|=m-2$ (and without loss of generality, say that $\sigma'=\{2,\dots,m-1\}$) and some $K_j\in\mathcal K^{i_j}$, $j=2,\dots,m-1$, such that $f_j(x_j)=\chi_{K_j}(x_j)$, for every $j=2,\dots,m-1$.

Thus, we get two possible cases. In the first one, $f_j(x_j)=\chi_{K_j}(x_j)$, $j=1,\dots,m-1$, and the characterization holds. In the second one, $f_j(x_j)=\chi_{K_j}(x_j)$, $j=2,\dots,m-1$, and $\min\{f_2(x_2),\dots,f_m(x_m)\}=\chi_C(x_2,\dots,x_m)$. Hence
\[
\begin{split}
\chi_C(x_2,\dots,x_m) & = \min\{f_2(x_2),\dots,f_m(x_m)\} \\
& = \min\{\chi_{K_2}(x_2),\dots,\chi_{K_{m-1}}(x_{m-1}),f_m(x_m)\} \\
& =f_m(x_m)
\end{split}
\]
for every $x_i\in\mathcal K_i$, $i=2,\dots,m-1$ and $x_m\in\mathrm{supp}(f_m)$. Hence, $C=K_2\times\cdots\times K_{m-1}\times\mathrm{supp}(f_m)$, and therefore $f_m(x_m)=\chi_{K_m}(x_m)$, where $K_m=\mathrm{cl}(\mathrm{supp}(f_m))$. This concludes the proof.
\end{proof}

We are now able to prove the characterization of the equality in Lemma \ref{lem:OperatorLowerBound}.
\begin{proof}[Equality case in Lemma \ref{lem:OperatorLowerBound}]
    Equality in \eqref{eq:OperatorLowerBound} means that there is equality in $(i)$, $(ii)$, and $(iii)$ in the proof of Lemma \ref{lem:OperatorLowerBound}.

    Equality in $(iii)$ holds by Lemma \ref{lem:MinProdgeneralization} if and only if (after reordering) there exist $K_j\in\mathcal K^{i_j}$, $j=1,\dots,m-1$, such that $\frac{f_j}{A}=\chi_{K_j}$ for every $j=1,\dots,m-1$. In particular, $\|f_j\|_\infty=A=\|\overline{\bigstar}_{j=1}^m F_j\|_\infty=\max\{\|f_j\|_\infty:j=1,\dots,m\}$ for every $j=1,\dots,m-1$. Thus, it remains to show that $f_m$ is a multiple of a characteristic function too.

    Equality in $(ii)$ implies, by Lemma \ref{lem:generalizationRS}, that 
    \[
    0\in\{x\in\mathbb R^{i_j}:f_j(x) \geq tA\}
    \]
    for every $j=1,\dots,m$ and every $t\in(0,\frac{\|f_m\|_\infty}{A}]$. Notice that if $t>\frac{\|f_m\|_\infty}{A}$ then $\{x\in\mathbb R^{i_m}:f_m(x)\geq tA\}=\emptyset$. In particular, taking $t_0:=\frac{\|f_m\|_\infty}{A}$ then $0\in \{x\in\mathbb R^{i_m}:f_m(x) \geq \|f_m\|_\infty\}$, i.e., $f_m(0)=\|f\|_\infty$.

    Furthermore, since $f_j=A\chi_{K_j}$ for $j=1,\dots,m-1$, letting $t\in(0,\frac{\|f_m\|_\infty}{A}]$, we have that
    \[
    \begin{split}
        &\{(0,\dots,0,x_j,0,\dots,0):f_j(x_j)\geq tA\} =\{(0,\dots,0,x_j,0,\dots,0):\chi_{K_j}(x_j) \geq t\} \\
        &=\{0\}^{i_1}\times\cdots\times\{0\}^{i_{j-1}}\times K_j\times\{0\}^{i_{j+1}}\times\cdots\times\{0\}^{i_m}
    \end{split}
    \]
    where $(0,\dots,0,x_j,0,\dots,0)\in\mathbb R^{i_1}\times\cdots\times\mathbb R^{i_{j-1}}\times \mathbb R^{i_{j}}\times\mathbb R^{i_{j+1}}\times\cdots\times\mathbb R^{i_m}$. Thus
    \[
    0\in K_1\times\cdots\times K_{m-1}\times\{x\in\mathbb R^{i_m}:f_m(x) \geq tA\},
    \]
    for every $t\in(0,\frac{\|f_m\|_\infty}{A}]$.

    Let us assume now that there is equality in $(i)$. First, we show that $\|f_m\|_\infty=A$ by contradiction. Hence, let us assume that $\|f_m\|_\infty<A$. Equality in $(i)$ means that, for every $t\in(0,1)$, equality holds between the sets
    \begin{equation}\label{eq:EqualConv}
    \mathrm{conv}\left((K_1\times\cdots\times K_{m-1}\times\{0\}^{i_m}) \cup \{(0,\dots,0,x_m):f_m(x_m)\geq tA\}\right)
    \end{equation}
    and
    \begin{equation}\label{eq:EqualInSupProd}
    \begin{split}
     \left\{(z_1,\dots,z_m):\sup_{\sum_{j=1}^m\lambda_j=1}\prod_{j=1}^m f_j\left(\frac{z_j}{\lambda_j}\right)^{\lambda_j} \geq tA\right\} & \\
    = \left\{(z_1,\dots,z_m):\overline{\bigstar}_{j=1}^m F_j(z) \geq t \|\overline{\bigstar}_{j=1}^m F_j\|_\infty\right\}. &
    \end{split}
    \end{equation}
    In particular, equality holds for every $t\in(\frac{\|f_m\|_\infty}{A},1)$. However, first let us notice that the set in \eqref{eq:EqualConv} has empty interior, since
    \[
    \{x_m\in\mathbb R^{i_m}:f_m(x_m) \geq tA\} = \emptyset
    \]
    for every $t\in(\frac{\|f_m\|_\infty}{A},1)$, and hence the set described in \eqref{eq:EqualConv} has volume equal to $0$. Second, let us prove that the volume of the set in \eqref{eq:EqualInSupProd} has strictly positive volume. Taking into account that $f_j=A\chi_{K_j}$ for every $j=1,\dots,m-1$, let $z_j\in\lambda_jK_j\subset K_j$ (this last inclusion is true due to $0\in K_j$) for certain $\lambda_j\in(0,1)$, and $j=1,\dots,m-1$, since
    \[
    \sup_{\sum_{j=1}^m\lambda_j=1}\prod_{j=1}^m f_j\left(\frac{z_j}{\lambda_j}\right)^{\lambda_j} \geq tA 
    \quad\Leftrightarrow\quad 
    \sup_{\sum_{j=1}^m\lambda_j=1}\prod_{j=1}^m \left(\frac{f_j\left(\frac{z_j}{\lambda_j}\right)}{A}\right)^{\lambda_j} \geq t,
    \]
    the term inside the supremum in the set within \eqref{eq:EqualInSupProd} can be rewritten as
    \begin{equation}\label{eq:LowerInSupProd}
        \left(\frac{f_m\left(\frac{z_m}{\lambda_m}\right)}{A}\right)^{\lambda_m},
    \end{equation}
    with $\lambda_1+\cdots+\lambda_m=1$, for some $\lambda_m \in(0,1)$. Moreover, since $f_m(0)=\|f_m\|_\infty$, given $\mu>0$ there exists a closed ball containing the origin $B_\mu\subset\mathbb R^{i_m}$ fulfilling $f_m(z_m) \geq \|f_m\|_\infty-\mu$ for every $z_m\in B_\mu$. Equivalently, if $\frac{z_m}{\lambda_m}\in B_\mu$ then, since $0\in B_\mu$, $z_m\in\lambda_m B_\mu \subset B_\mu$ and thus $f_m(\frac{z_m}{\lambda_m}) \geq \|f_m\|_\infty-\mu$. Therefore, we could bound from below \eqref{eq:LowerInSupProd} by
    \[
    \left(\frac{\|f_m\|_\infty-\mu}{A}\right)^{\lambda_m}.
    \]
    In particular, let $\lambda_{m}(t)\in(0,1)$ be small enough such that
    \[
    \left(\frac{f_m\left(\frac{z_m}{\lambda_{m}(t)}\right)}{A}\right)^{\lambda_{m}(t)} \geq \left(\frac{\|f_m\|_\infty-\mu}{A}\right)^{\lambda_{m}(t)} \geq t
    \]
    for every $t\in(\frac{\|f_m\|_\infty}{A},1)$. Therefore, taking $\lambda_j:=\frac{1-\lambda_{m}(t)}{m-1}$ for $j=1,\dots,m-1$ we have that
    \[
    \sup_{\sum_{j=1}^m\lambda_j=1} \left\{ \left( \frac{f_m\left( \frac{z_m}{\lambda_m} \right)}{A} \right)^{\lambda_m} \right\}
    \geq \left(\frac{f_m\left(\frac{z_m}{\lambda_{m}(t)}\right)}{A}\right)^{\lambda_{m}(t)} \geq t,
    \]
    whenever $z_j\in \frac{1-\lambda_{m}(t)}{m-1} K_j$ for every $j=1,\dots,m-1$ and $z_m\in\lambda_{m}(t)B_\mu$. Thus, for every $t\in(\frac{\|f_m\|_\infty}{A},1)$ we find that the set
    \[
    (\lambda_1K_1)\times\cdots\times(\lambda_{m-1}K_{m-1})\times(\lambda_{m}(t) B_\mu) 
    \]
    is contained in the set in \eqref{eq:EqualInSupProd}, thus proving that it has strictly positive volume, a contradiction. Hence, $\|f_m\|_\infty=A$.

    We finally show that $\frac{f_m}{A}$ is a characteristic function. Let us suppose that there exists $x_0\in\mathrm{int}(\mathrm{supp}(f))$ such that
    \[
    a:=\frac{f_m(x_0)}{A} \in (0,1).
    \]
    Let $\varepsilon_0>0$ be such that $\varepsilon_0<\min\{a,1-a\}$. Since $f_m$ is a log-concave function, then it is continuous in $\mathrm{int}(\mathrm{supp}(f))$, and hence for every $\varepsilon\in(0,\varepsilon_0)$ there exists an Euclidean ball $U_\varepsilon$ centred on $x_0$ such that
    \[
    a-\varepsilon<\frac{f_m(x)}{A} < a+\varepsilon,
    \]
    for every $x\in U_\varepsilon$. It holds that
    \[
    U_\varepsilon \cap \{x\in\mathbb R^{i_m}: f_m(x)\geq (a+\varepsilon)A\} = \emptyset
    \]
    and 
    \[
    U_\varepsilon \subset \{x\in\mathbb R^{i_m}:f_m(x)\geq (a-\varepsilon)A\}.
    \]
    Since for every $\varepsilon\in(0,\varepsilon_0)$ we have that
    \[
    \lim_{\theta\rightarrow 0}(a-\varepsilon)^\theta=1,
    \]
    there exists $\theta_0(\varepsilon)\in(0,1)$ such that $a+\varepsilon < (a-\varepsilon)^\theta <1$ for every $\theta\in(0,\theta_0(\varepsilon))$. Therefore, for every $x\in U_\varepsilon$ and $\theta\in(0,\theta_0(\varepsilon))$ we have that
    \[
    \left(\frac{f_m(x)}{A}\right)^\theta > (a-\varepsilon)^\theta >a+\varepsilon
    \]
    and thus the set
    \[
    \begin{split}
    \mathcal P  := & \mathrm{conv}\left[ \{K_1\times\cdots\times K_{m-1}\times \{0\}^{i_m},\right. \\ 
    &  \left. \{0\}^{i_1}\times\cdots\times\{0\}^{i_{m-1}}\times(U_\varepsilon\cup\{x:f_m(x)\geq(a+\varepsilon)A\})\}\right]    
    \end{split}
    \]
    is contained within
    \[
    \mathcal M_1:=\left\{ (z_1,\dots,z_m): \sup_{\sum_{j=1}^m\lambda_j} \left(\frac{f_m\left(\frac{z_m}{\lambda_m}\right)}{A}\right)^{\lambda_m} \prod_{j=1}^{m-1}\left(\frac{\chi_{K_j}\left(\frac{z_j}{\lambda_j}\right)}{A}\right)^{\lambda_j} \geq a+\varepsilon \right\}.
    \]
    Furthermore, since $U_\varepsilon \cap \{x\in\mathbb R^{i_m}:f_m(x)\geq(a+\varepsilon)A\} = \emptyset$ and both sets are convex, there exists a hyperplane separating both sets. Thus, the volume of $\mathcal P$ is strictly larger than that of
    \[
    \begin{split}
   \mathcal M_2:= & \mathrm{conv}\left[ \Big\{K_1\times\cdots\times K_{m-1}\times \{0\}^{i_m},\right. \\ 
    &  \left. \{0\}^{i_1}\times\cdots\times\{0\}^{i_{m-1}}\times\{x:f_m(x)\geq(a+\varepsilon)A\}\Big\}\right]    
    \end{split}
    \]
    
    Hence, we have obtained that $\vol{\mathcal M_2} < \vol{\mathcal P} \leq\vol{\mathcal M_1}$ for every $\varepsilon\in(0,\varepsilon_0)$, contradicting the equality in $(i)$, since we get that $(i)$ is strict for every $t\in(a,a+\varepsilon_0)$, and thus, the corresponding integrals would be strict too. 
    Therefore, $\frac{f_m}{A}$ takes just values $0$ or $1$ within $\mathrm{int}(\mathrm{supp}(f))$, i.e.~$\frac{f_m}{A}$ is a characteristic function too, concluding the proof of the equality case.
\end{proof}

We can now address the characterization of the equality case of Theorem \ref{thm:FunctionalRSscoverings}. We first show an elementary lemma.
\begin{lemma}\label{lem:CapWithSubspace}
    Let $K_j\in\mathcal K^{i_j}$ for $j=1,\dots,m$, with $m\geq 2$ and let $\sigma\subset[i_1+\cdots+i_m]$. Then
    \[
    \begin{split}
         \mathrm{conv}\left( \{ \{0\}^{i_1}\times\cdots\times\{0\}^{i_{j-1}}\times K_j \times \{0\}^{i_{j+1}}\times\cdots\times\{0\}^{i_m}:j=1,\dots,m \} \right) \cap H_\sigma = & \\
         \mathrm{conv}\left( \{ \{0\}^{i_1}\times\cdots\times\{0\}^{i_{j-1}}\times (K_j \cap H_\sigma) \times \{0\}^{i_{j+1}}\times\cdots\times\{0\}^{i_m}:j=1,\dots,m \} \right) &
    \end{split}
    \]
\end{lemma}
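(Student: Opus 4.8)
The plan is to prove the two inclusions separately, after fixing notation. First I would set $n:=i_1+\cdots+i_m$ and identify $\mathbb{R}^n=\mathbb{R}^{i_1}\times\cdots\times\mathbb{R}^{i_m}$, writing $x=(x_1,\dots,x_m)$ with $x_j\in\mathbb{R}^{i_j}$; for each $j$ let $\sigma^{(j)}:=\{k\in[i_j]:i_1+\cdots+i_{j-1}+k\in\sigma\}\subseteq[i_j]$ be the trace of $\sigma$ on the $j$th block. Since $H_\sigma$ is a \emph{coordinate} subspace, $x\in H_\sigma$ holds if and only if $x_j\in H_{\sigma^{(j)}}\subseteq\mathbb{R}^{i_j}$ for every $j=1,\dots,m$, and with this convention the symbol $K_j\cap H_\sigma$ in the statement is to be read as $K_j\cap H_{\sigma^{(j)}}$. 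I would abbreviate $C_j:=\{0\}^{i_1}\times\cdots\times K_j\times\cdots\times\{0\}^{i_m}$ and $C_j':=\{0\}^{i_1}\times\cdots\times(K_j\cap H_{\sigma^{(j)}})\times\cdots\times\{0\}^{i_m}$, so that the claim reads $\mathrm{conv}(\{C_j:j\in[m]\})\cap H_\sigma=\mathrm{conv}(\{C_j':j\in[m]\})$.

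For the inclusion $\supseteq$ I would observe that each $C_j'$ lies in $C_j$, hence in $\mathrm{conv}(\{C_i:i\in[m]\})$, and also in $H_\sigma$ (all its block components vanish except the $j$th, which lies in $H_{\sigma^{(j)}}$). Thus $C_j'\subseteq\mathrm{conv}(\{C_i:i\in[m]\})\cap H_\sigma$ for every $j$, and since the latter set is convex it contains $\mathrm{conv}(\{C_j':j\in[m]\})$, which is exactly the right-hand side.

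For the substantive inclusion $\subseteq$ I would take an arbitrary $x\in\mathrm{conv}(\{C_j:j\in[m]\})\cap H_\sigma$ and write $x=\sum_{j=1}^m\lambda_j y^{(j)}$ with $\lambda_j\geq 0$, $\sum_j\lambda_j=1$ and $y^{(j)}\in C_j$; each $y^{(j)}$ has the form $(0,\dots,0,z_j,0,\dots,0)$ with $z_j\in K_j$ sitting in the $j$th block. Since only $y^{(j)}$ contributes to the $j$th block, $x_j=\lambda_j z_j$, and $x\in H_\sigma$ forces $\lambda_j z_j\in H_{\sigma^{(j)}}$ for every $j$; because $H_{\sigma^{(j)}}$ is linear, whenever $\lambda_j>0$ this gives $z_j\in K_j\cap H_{\sigma^{(j)}}$, hence $(0,\dots,0,z_j,0,\dots,0)\in C_j'$. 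Discarding the (zero) terms with $\lambda_j=0$, I would conclude $x=\sum_{j:\lambda_j>0}\lambda_j(0,\dots,0,z_j,0,\dots,0)\in\mathrm{conv}(\{C_j':j\in[m]\})$. The proof is elementary; the only mild obstacle is the blockwise bookkeeping of $H_\sigma$ versus the traces $H_{\sigma^{(j)}}$, and one should note that this decomposition — and indeed the very meaning of $K_j\cap H_\sigma$ — relies on $H_\sigma$ being spanned by standard basis vectors rather than an arbitrary subspace. Also, since the indices $j$ with $\lambda_j=0$ are simply dropped, no nonemptiness hypothesis on the sets $K_j\cap H_{\sigma^{(j)}}$ is required and no degenerate cases arise.
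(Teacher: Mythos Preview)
Your proof is correct. It takes a different route from the paper's: the paper proves the nontrivial inclusion $\subseteq$ by induction on $m$, establishing the case $m=2$ via a coordinatewise check and then reducing general $m$ to $m-1$ by writing $\mathrm{conv}(C_1,\dots,C_m)=\mathrm{conv}\big(\mathrm{conv}(C_1,\dots,C_{m-1}),C_m\big)$ and applying the $m=2$ case together with the induction hypothesis. You instead give a single direct argument: writing $x=\sum_j\lambda_j y^{(j)}$ and observing that the block structure forces $x_j=\lambda_j z_j$ blockwise, you read off immediately that each $z_j$ with $\lambda_j>0$ lies in $H_{\sigma^{(j)}}$. Your approach is shorter and avoids the nested convex-hull bookkeeping of the inductive step; the paper's only advantage is that it does not require introducing the trace notation $\sigma^{(j)}$ explicitly, since the $m=2$ coordinate computation hides it. Your explicit handling of the $\lambda_j=0$ terms is also cleaner than the paper, which tacitly assumes in its $m=2$ case that $\lambda\in(0,1)$ when concluding $x_1,x_2\in H_\sigma$.
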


\begin{proof}
    The inclusion $\supset$ is clear. Let us therefore show the inclusion $\subset$. We prove it by induction on $m\geq 2$. Let first $m=2$, and let 
    \[
    x\in\mathrm{conv}\left(\{K_1\times\{0\}^{i_2},\{0\}^{i_2}\times K_2\}\right) \cap H_\sigma.
    \]
    Then, there exist $\lambda\in[0,1]$, $x_1\in K_1\times\{0\}^{i_2}$, and $x_2\in \{0\}^{i_2}\times K_2$ such that $x=\lambda x_1+(1-\lambda)x_2$. Thus $x_1=(x_{1_1},\dots,x_{1_{i_1}},0,\dots,0)$ and $x_2=(0,\dots,0,x_{2_{i_1+1}},\dots,x_{2_{i_1+i_2}})$, and hence we have that
    \[
    x=\left( \lambda x_{1_1},\dots, \lambda x_{1_{i_1}}, (1-\lambda)x_{2_{i_1+1}},\dots,(1-\lambda)x_{2_{i_1+i_2}} \right).
    \]
    Moreover, we notice that
    $$
    \langle x,e_\ell\rangle = \left\{
    \begin{array}{ll}
        \lambda x_{1_\ell} &  \text{ if } \ell\in[i_1] \\
        (1-\lambda)x_{2_{\ell}} & \text{ if } \ell\in([i_1+i_2]\setminus[i_1])  
    \end{array}
    \right.$$
    and since $x\in H_\sigma$, we have that $\langle x,e_\ell\rangle =0$ if $\ell\notin \sigma$. 
    Therefore, $x_1,x_2\in H_\sigma$ and the inclusion is proved.

    We now assume that the result is true for at most $m-1$ subsets. Then applying the case of $m=2$ and the induction step we get that
    \[
    \begin{split}
       &  \mathrm{conv}\left( \{ \{0\}^{i_1}\times\cdots\times\{0\}^{i_{j-1}}\times K_j \times \{0\}^{i_{j+1}}\times\cdots\times\{0\}^{i_m}:j=1,\dots,m \} \right) \cap H_\sigma \\
       & = \mathrm{conv}( \{\mathrm{conv}\left( \{ \{0\}^{i_1}\times\cdots\times\{0\}^{i_{j-1}}\times K_j \times \{0\}^{i_{j+1}}\times\cdots\times\{0\}^{i_m}:j\in[m-1] \} \right), \\
       & (\{0\}^{i_1}\times\cdots\times\{0\}^{i_{m-1}}\times K_m)\} \cap H_\sigma \\
       & = \mathrm{conv}( \{(\{0\}^{i_1}\times\cdots\times\{0\}^{i_{m-1}}\times (K_m\cap H_\sigma)), \\
       & \mathrm{conv}\left( \{ \{0\}^{i_1}\times\cdots\times\{0\}^{i_{j-1}}\times K_j \times \{0\}^{i_{j+1}}\times\cdots\times\{0\}^{i_m}:j\in[m-1] \} \right) \cap H_\sigma\} \\
       & = \mathrm{conv}(\{\{0\}^{i_1}\times\cdots\times\{0\}^{i_{j-1}}\times (K_j\cap H_\sigma) \times\{0\}^{i_{j+1}}\times\cdots\times\{0\}^{i_m}:j=1,\dots,m\}),
    \end{split}
    \]
    concluding the proof.
\end{proof}

We are finally able to show the characterization of the equality case of Theorem \ref{thm:FunctionalRSscoverings}.
\begin{proof}[Proof of the equality case of Theorem \ref{thm:FunctionalRSscoverings}]
    Let us suppose we have equality in \eqref{eq:FunctionalRSscoverings}. It is then clear that for every $j=1,\dots,s$ we have that
    \[
    \max_{\ell=1,\dots,i_j} \|f|_{F_{\sigma_{j_\ell}}}\|_\infty ^{i_j-1} \int_{\mathbb R^n} \overline{\bigstar}_{\ell=1}^{i_j} F_{j_\ell}(w) dw = \frac{\prod_{\ell=1}^{i_j}|\sigma_{j_\ell}|!}{(\sum_{\ell=1}^{i_j}|\sigma_{j_\ell}|)!} \prod_{\ell=1}^{i_j}\int_{F_{\sigma_{j_\ell}}} f(x)dx.
    \]
    By the characterization of the equality case of Lemma \ref{lem:OperatorLowerBound} we have that $\|f\|_\infty=\|f|_{H_{\sigma_{i}}}\|_\infty$ for every $i=1,\dots,m$, and for every $j=1,\dots,s$ then
    $f|_{H_{\sigma_{j_\ell}}} = \|f\|_\infty \chi_{K_{j_\ell}}$,
    where $K_{j_\ell} \in \mathcal K^{|\sigma_{j_\ell}|}$ such that $0\in K_{j_\ell}$ for every $\ell=1,\dots,i_j$. Even more, we also have for every $j=1,\dots,s$ that
\begin{eqnarray*}
        f(z)&=&\overline{\bigstar}_{\ell=1}^{i_j}F_{j_\ell}(z)=\sup_{\substack{z=\sum_{\ell=1}^{i_j}\lambda_{\ell}x_{\ell}\\
         \sum_{\ell=1}^{i_j}\lambda_{\ell}=1}}\prod_{\ell=1}^{i_j}F_{j_\ell}(x_{\ell})^{\lambda_{\ell}}\\
         &=& \sup_{\substack{z=\sum_{\ell=1}^{i_j}\lambda_{\ell}x_{\ell}\\
         \sum_{\ell=1}^{i_j}\lambda_{\ell}=1}}\prod_{\ell=1}^{i_j}F_{j_\ell}(x_{\ell_{j_1}},\ldots,x_{\ell_{j_{i_j}}})^{\lambda_{\ell}}\\
         &=&\sup_{\substack{z=\sum_{\ell=1}^{i_j}\lambda_{\ell}x_{\ell}\\
         \sum_{\ell=1}^{i_j}\lambda_{\ell}=1}}\prod_{\ell=1}^{i_j}\left(f(x_{\ell_{j_\ell}})\chi_{\{0\}} (x_{\ell_{j_1}},\ldots,x_{\ell_{j_{\ell-1}}},x_{\ell_{j_{\ell+1}}},\ldots,x_{\ell_{j_{i_j}}})\right)^{\lambda_{\ell}}\\
         &=&\|f\|_\infty\sup_{\substack{z=\sum_{\ell=1}^{i_j}\lambda_\ell x_\ell\\
         \sum_{\ell=1}^{i_j}\lambda_\ell=1}}\prod_{\ell=1}^{i_j}\left(\chi_{K_{j_\ell}}(x_{\ell_{j_\ell}})\chi_{\{0\}} (x_{\ell_{j_1}},\ldots,x_{\ell_{j_{\ell-1}}},x_{\ell_{j_{\ell+1}}},\ldots,x_{\ell_{j_{i_j}}})\right)^{\lambda_\ell}\\
         &=& \|f\|_\infty\sup_{\substack{z=\sum_{\ell=1}^{i_j}\lambda_\ell x_\ell\\
         \sum_{\ell=1}^{i_j}\lambda_\ell=1}}\prod_{\ell=1}^{i_j}\chi_{ C_{j_\ell} }(x_\ell)^{\lambda_\ell}\\
         &=& \|f\|_\infty \overline{\bigstar}_{\ell=1}^{i_j} \chi_{C_{j_\ell}}(z)
\end{eqnarray*}
where $C_{j_\ell}:=\{0\}^{|\sigma_{j_1}|}\times\cdots\times\{0\}^{|\sigma_{j_{\ell-1}}|}\times K_{j_\ell}\times\{0\}^{|\sigma_{j_{\ell+1}}|}\times\cdots\times\{0\}^{|\sigma_{j_{i_j}}|}$, for every $\ell=1,\dots,i_j$. Indeed, by $(ii)$ in Proposition \ref{prop:elementaryProp} we have that
\[
\overline{\bigstar}_{\ell=1}^{i_j} \chi_{C_{j_\ell}}(z) = \chi_{C_j}(z)
\]
where $C_j:=\mathrm{conv}(\{C_{j_\ell}:\ell=1,\dots,i_j\})$, for every $j=1,\dots,s$. In particular, it holds that $\chi_{C_j}=\frac{f}{\|f\|_\infty}=\chi_{C_i}$ for every $j\neq i$, and thus $C_j$ coincide with $C_i$ for every $j\neq i$. We call that set $C$. 

Finally, we prove that
$$
C=\mathrm{conv}\left(\left\{\{0\}^{|\overline{\sigma}_1|}\times\ldots\times \{0\}^{|\overline{\sigma}_{j-1}|}\times K_j\times \{0\}^{|\overline{\sigma}_{j+1}|}\times\ldots\times \{0\}^{|\overline{\sigma}_k|}:j\in[k]\right\}\right)
$$
where $K_j\in\mathcal{K}^{|\overline{\sigma}_j|}$ for every $j=1,\dots,k$, and where $(\overline{\sigma}_1,\ldots,\overline{\sigma}_k)$ is the $1$-cover of $[n]$ induced by $(\sigma_1,\ldots,\sigma_m).$ Let us prove it by induction on $s$. 

Let us suppose that $s=2$. Then, we have for every $\ell\in\{1,\ldots,i_1\}$
\[
\begin{split}
    &K_{1_\ell}=C_1\cap H_{\sigma_{1_\ell}}=C_2\cap H_{\sigma_{1_\ell}}\\
    &=  \underset{t=1,\ldots,i_2}{\mathrm{conv}}\left(\left\{\{0\}^{|\sigma_{2_1}|}\times\ldots\times \{0\}^{|\sigma_{2_{t-1}}|}\times K_{2_t}\times \{0\}^{|\sigma_{2_{t+1}}|}\times\ldots\times \{0\}^{|\sigma_{2_{i_2}}|}\right\}\right)\cap H_{\sigma_{1_\ell}}\\
    &= \underset{t=1,\ldots,i_2}{\mathrm{conv}}\left(\left\{\{0\}^{|\sigma_{2_1}|}\times\ldots\times \{0\}^{|\sigma_{2_{t-1}}|}\times (K_{2_t}\cap H_{\sigma_{1_\ell}})\times \{0\}^{|\sigma_{2_{t+1}}|}\times\ldots\times \{0\}^{|\sigma_{2_{i_2}}|}\right\}\right),
\end{split}
\]
where we have applied Lemma \ref{lem:CapWithSubspace} in the last equality sign above. Then, that expression is clearly equal to
    \[
    \underset{ \substack{\sigma^t_{1_\ell}=\sigma_{1_\ell}\cap\sigma_{2_t}\\ t=1,\ldots,i_2} }{\mathrm{conv}} \left(\left\{\{0\}^{|\sigma_{2_1}|}\times\ldots\times \{0\}^{|\sigma_{2_{t-1}}|}\times (K_{2_t}\cap H_{\sigma^t_{1_\ell}})\times \{0\}^{|\sigma_{2_{t+1}}|}\times\ldots\times \{0\}^{|\sigma_{2_{i_2}}|} \right\}\right)
\]
where above $\sigma^t_{1_l}$ is only defined whenever $\sigma_{1_l}\cap\sigma_{2_t}\neq \emptyset$. Therefore
\[
\begin{split}
    & C_1 =\underset{\ell=1,\ldots,i_1}{\mathrm{conv}}\left(\left\{\{0\}^{|\sigma_{1_1}|}\times\ldots\times \{0\}^{|\sigma_{1_{\ell-1}}|}\times K_{1_\ell}\times \{0\}^{|\sigma_{1_{\ell+1}}|}\times\ldots\times \{0\}^{|\sigma_{1_{i_j}}|}\right\}\right)\\
    &=\mathrm{conv}\Big(\Big\{\{0\}^{|\sigma_{1_1}|}\times\ldots\times \{0\}^{|\sigma_{1_{\ell-1}}|}\times\\
    & \underset{ \substack{\sigma^t_{1_\ell}=\sigma_{1_\ell}\cap\sigma_{2_t} \\ t=1,\ldots,i_2}  }{\mathrm{conv}} \left(\left\{ \{0\}^{|\sigma_{2_1}|}\times\ldots\times \{0\}^{|\sigma_{2_{t-1}}|}\times K_{2_t}\cap H_{\sigma'_{1_t}}\times \{0\}^{|\sigma_{2_{t+1}}|}\times\ldots\times \{0\}^{|\sigma_{2_{i_2}}|}\right\}\right)\\
    &\times \{0\}^{|\sigma_{1_{\ell+1}}|}\times\ldots\times \{0\}^{|\sigma_{1_{i_1}}|}:\ell=1,\ldots,i_1\Big\}\Big)
\end{split} 
\]
where again above we only consider $\sigma^t_{1_\ell}$ such that $\sigma_{1_\ell}\cap\sigma_{2_t}\neq \emptyset$. Obviously, the last expression can be equivalently written as
\[
\underset{r=1,\dots,p}{\mathrm{conv}}\left(\left\{ \{0\}^{|\sigma^*_{1}|}\times\ldots\times \{0\}^{|\sigma^*_{r-1}|}\times C\cap H_{\sigma^*_r}\times \{0\}^{|\sigma^*_{r+1}|}\times\ldots\times \{0\}^{|\sigma^*_{p}|} \right\}\right),
\]
where $(\sigma_1^*,\dots,\sigma^*_p)$ is the $1$-cover induced by $(\sigma_{1_1},\dots,\sigma_{1_{i_1}},\sigma_{2_1},\dots,\sigma_{2_{i_2}})$. Finally, repeating this process iteratively for every $C_j$, $j=3,\dots,s$, concludes the result.
\end{proof}

We now prove the caracterization of the equality case of Theorem \ref{thm:sCoverDecom1Covers}. In order to prove it, we remember the well-known Brunn-Minkowski theorem \cite{AGM15}. Remember that the Pre\'ekopa-Leindler inequality \eqref{eq:PrekLeind} is indeed a functional counterpart of this result. First, for any given $K,C\in\mathcal K^n$, we say that $K+C:=\{x+y:x\in K,y\in C\}$ is the Minkowski sum of $K$ and $C$. Moreover, $t\cdot K:=\{tx:x\in K\}$ for every $t\geq 0$. The Brunn-Minkowski inequality states that for every $K,C\in\mathcal K^n$, and every $\lambda\in[0,1]$, it holds that
\begin{equation}\label{eq:BrunnMinkowski}
\vol{(1-\lambda)K+\lambda C}^\frac1n \geq (1-\lambda)\vol{K}^\frac1n+\lambda\vol{C}^\frac1n.
\end{equation}
Moreover, equality holds if and only either $K$ and $C$ are rescales of each other or $K$ and $C$ are contained in parallel hyperplanes.
\begin{proof}[Proof of the equality case of Theorem \ref{thm:sCoverDecom1Covers}]
    Equality holds in Theorem \ref{thm:sCoverDecom1Covers} if and only if 
    \[
    \|f\|_\infty^s \left(\int_{H_\sigma}f(x)dx\right)^{m-s}
     = \frac{\prod_{j=1}^m(|\sigma|-|\sigma_j|)!}{|\sigma|!^{m-s}} \prod_{j=1}^m\int_{H_{\sigma\setminus\sigma_j}} f(x)dx
    \]
    where $f(x)=\vol{K\cap(x+H_\sigma^\bot)}$ (see the proof of the inequality of Theorem \ref{thm:sCoverDecom1Covers}). Hence, by the equality case of Theorem \ref{thm:FunctionalRSscoverings} we would then have that $f=\|f\|_\infty\chi_{P_{H_{\sigma}}K}$, where 
    \[
    \begin{split}
      P_{H_{\sigma}}K=\underset{j=1,\dots,k}{\mathrm{conv}} \left(\left\{   \{0\}^{|\overline{\sigma}_1|}\times\cdots\times\{0\}^{|\overline{\sigma}_{j-1}|}\times K_j \times \{0\}^{|\overline{\sigma}_{j+1}|} \times\cdots \times \{0\}^{|\overline{\sigma}_k|} \right\}\right),
    \end{split}
    \]
    for some $0\in K_j\in\mathcal K^{|\overline{\sigma}_j|}$, $j=1,\dots,k$, and where $(\overline{\sigma}_1,\dots,\overline{\sigma}_k)$ is the $1$-cover induced by $(\sigma\setminus\sigma_1,\dots,\sigma\setminus\sigma_m)$. The last expression can be equivalently written as
    \[
    \begin{split}
     & P_{H_{\sigma}}K \\
     & =\underset{j=1,\dots,k}{\mathrm{conv}} \left(\left\{   \{0\}^{|\overline{\sigma}_1|}\times\cdots\times\{0\}^{|\overline{\sigma}_{j-1}|}\times (P_{H_{\sigma}}K \cap H_{\overline{\sigma}_j}) \times \{0\}^{|\overline{\sigma}_{j+1}|} \times\cdots \times \{0\}^{|\overline{\sigma}_k|} \right\}\right) \\
     & = \mathrm{conv}\left(\left\{  P_{H_{\sigma}}K \cap H_{\overline{\sigma}_j}:j=1,\dots,k   \right\}\right).
    \end{split}
    \]
    Moreover, we also have that $0\in (P_{H_{\sigma}}K)\cap H_{\overline{\sigma}_j}$, for every $j=1,\dots,k$, i.e.~$0\in P_{H_{\sigma}}K$.
    
    Now, notice that the $1$-cover of $\sigma$ induced by the $(m-s)$-cover  $(\sigma\setminus\sigma_1,\dots,\sigma\setminus\sigma_m)$ of $\sigma$ is given by $\cap_{j=1}^m(\sigma\setminus\sigma_j)^{\varepsilon(j)}$, where $\varepsilon(j)\in\{0,1\}$, and where $(\sigma\setminus\sigma_j)^0=\sigma\setminus\sigma_j$ and $(\sigma\setminus\sigma_j)^1=\sigma\setminus(\sigma\setminus\sigma_j)=\sigma_j$, i.e.~it coincides with the $1$-cover of $\sigma$ induced by the $s$-cover $(\sigma_1,\dots,\sigma_m)$ of $\sigma$.

    Finally, we go back to the condition $f=\|f\|_\infty\chi_{P_{H_\sigma}K}$. That means
    \[
    \vol{K\cap(x+H_\sigma^\bot)} = \rho = \max_{x\in H_\sigma} \vol{K\cap(x+H^\bot_\sigma)}
    \]
    for every $x\in P_{H_\sigma}K$. Notice that for every $z\in\mathrm{int}(P_{H_\sigma}K)$, we find $x,y\in \mathrm{int}(P_{H_\sigma}K)$ such that $\frac12 x+\frac12 y=z$. By the convexity of $K$ we then have that
    \[
    \frac12\left(K\cap(x+H_\sigma^\bot)\right)+\frac12\left(K\cap(y+H_\sigma^\bot)\right) \subseteq K\cap(z+H_\sigma^\bot).
    \]
    Applying $\vol{\cdot}$ to both members above, by its monotonicity and by \eqref{eq:BrunnMinkowski} we obtain that
    \[
    \begin{split}
        \rho^\frac{1}{n-|\sigma|} & = \vol{K\cap(z+H_\sigma^\bot})^\frac{1}{n-|\sigma|} \\
        & \geq \frac12 \vol{K\cap(x+H_\sigma^\bot})^\frac{1}{n-|\sigma|} +\frac12 \vol{K\cap(y+H_\sigma^\bot})^\frac{1}{n-|\sigma|} =\rho^\frac{1}{n-|\sigma|}.
    \end{split}
    \]
    We thus have equality in all inequalities above. In particular, by the equality case of \eqref{eq:BrunnMinkowski}, we then have that $K\cap(x+H_\sigma^\bot)$ and $K\cap(y+H_\sigma^\bot)$ are rescales of each other. Besides, since each of them has the same volume than the other $\rho$, then they are translates of each other. Moreover, that also means that $(z+H_\sigma^\bot)$ is a translate of them too, finishing the proof of the equality case.
\end{proof}

\section{Improved bounds in other cases}\label{sec:FurtherBounds}

We start this section by proving Theorem \ref{thm:ImprovedAlonso_Meyer}. 
\begin{proof}[Proof of Theorem \ref{thm:ImprovedAlonso_Meyer}]
    Since $K$ is an unconditional convex body, let us denote $K_+:=K\cap\mathbb R^n_+$, where $\mathbb R^n_+:=\{x\in\mathbb R^n:x_j\geq 0,\,j=1,\dots,n\}$. The idea is to show the result for $K_+$ and afterwards prove it for $K$ by means of the unconditionality.

    Let $\sigma\subset[n]$ and let $\sigma_j:=\sigma\setminus\{j\}$, for every $j\in\sigma$. Without loss of generality, we can suppose that $\sigma=\{1,\dots,|\sigma|\}$. We then have for every $x=(x_1,\dots,x_n)\in K_+$ by the convexity of $K_+$ that
    \[
    K_+ \supseteq \bigcup_{j\in\sigma} \mathrm{conv}\left(\{x\}\cup (K_+\cap(H_{\sigma_j}\oplus H_\sigma^\bot))\right).
    \]
    Notice that since each set in the union above is contained in one of the cones $\{x\in\mathbb R^n:x_j\geq 0,\,j=1,\dots,n\} \cap \langle e_i\rangle^\bot$, $i\in\sigma$, then such union is formed by sets whose intersection has measure $0$. Therefore, taking volumes above and using the monotonicity we get that
    \[
    \begin{split}
    \vol{K_+} & \geq \sum_{i\in\sigma} \vol{\mathrm{conv}\left( \{x\}\cup(K_+\cap(H_{\sigma_i}\oplus H_{\sigma}^\bot)) \right)} \\
    & = \sum_{i\in\sigma} \frac1n x_i \vol{K_+\cap(H_{\sigma_i}\oplus H_{\sigma}^\bot)}.
        \end{split}
    \]
    Moreover, since $x\in K_+$, by the unconditionality of $K$ then 
    \[
    (x_1,\dots,x_{|\sigma|},x_{|\sigma|+1},\dots,x_n),(-x_1,\dots,-x_{|\sigma|},x_{|\sigma|+1},\dots,x_n) \in K,
    \]
    and by the convexity of $K$ then $(0,\dots,0,x_{|\sigma|+1},\dots,x_n)\in K$, and thus $(x_{|\sigma|+1},\dots,x_n)\in K_+\cap \langle \{e_{|\sigma|+1},\dots,e_n\}\rangle$.
    This means that
    \[
    \begin{split}
         K_+ \subseteq \Big\{  x\in\mathbb R^n_+:\vol{K_+} \geq \sum_{j\in\sigma} \frac1n x_j \vol{K_+\cap(H_{\sigma_j}\oplus H_{\sigma}^\bot)}, & \\
         (x_{|\sigma|+1},\dots,x_n)\in K_+\cap \langle \{e_{|\sigma|+1},\dots,e_n\}\rangle  \Big\} &
    \end{split}
    \]
    Taking volumes above and by its monotonicity we conclude that
    \[
    \begin{split}
        &\vol{K_+} \leq \mathrm{vol}\Big( \Big\{  x\in\mathbb R^n_+:\vol{K_+} \geq \sum_{j\in\sigma} \frac1n x_j \vol{K_+\cap(H_{\sigma_j}\oplus H_{\sigma}^\bot)}, \\
        & \hspace{4.1cm} (x_{|\sigma|+1},\dots,x_n)\in K_+\cap \langle \{e_{|\sigma|+1},\dots,e_n\}\rangle  \Big\} \Big) \\
        & = \mathrm{vol} \Big[ \mathrm{diag}\left(\rho_1,\dots,\rho_{|\sigma|}\right) \{x\in\mathbb R^{|\sigma|}_+\cap H_\sigma:\sum_{j\in\sigma}x_j\leq 1\} \Big] \cdot\vol{K_+\cap \langle \{e_{|\sigma|+1},\dots,e_n\}\rangle} 
    \end{split}
    \]
    where $\rho_j:=\frac{\vol{K_+}}{\vol{K_+\cap(H_{\sigma_j}\oplus H_{\sigma}^\bot)}}$, for every $j\in\sigma$. Therefore, we obtain that
    \[
    \vol{K_+} \leq \frac{n^{|\sigma|}\vol{K_+}^{|\sigma|}}{\prod_{j\in\sigma}\vol{K_+\cap(H_{\sigma_j}\oplus H_\sigma^\bot)}} \frac{1}{|\sigma|!} \vol{K_+\cap \langle \{e_{|\sigma|+1},\dots,e_n\}\rangle},
    \]
    and the result follows for $K_+$. Using again the unconditionality of $K$, we can once again derive the result for $K$.
\end{proof}

We now observe that the inequality derived in Theorem \ref{thm:ImprovedAlonso_Meyer} is better than the one obtained in \eqref{eq:IneqOriginalABBC}. We quantify this improvement in the next proposition by just writing $|\sigma|=p$ and supposing that $n\geq 4|\sigma|$. We postpone its proof to the Section \ref{sec:Appendix} (Appendix).
\begin{propo}\label{prop:QuantitativeEstim1}
    Let 
     $2\leq p\leq \frac{n}{4}$. Then we have that
    $$\dfrac{\dfrac{p!}{n^{p}}}{\dfrac{(p-1)!^{p-1}}{(n(p-1))!}}\geq
      n^{\frac{n(p-2)}{4}} \cdot (p-1)^{\frac{(n-4p)(p-1)}{2}} \cdot (p-1)^{(p-1)(p+2)}.$$
\end{propo}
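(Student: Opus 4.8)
The plan is to replace the two factorials in the quotient by crude elementary bounds, pass to (natural) logarithms, and compare the coefficients of $\ln n$ and $\ln(p-1)$. Write the left-hand side as $L=\frac{p!\,(n(p-1))!}{n^{p}\,((p-1)!)^{p-1}}$. For the denominator, bounding each of the $p-1$ factors of $(p-1)!$ by $p-1$ gives $\big((p-1)!\big)^{p-1}\le (p-1)^{(p-1)^{2}}$. For the numerator, use $p!\ge 1$ together with the elementary estimate $k!\ge (k/e)^{k}$ (which follows from $e^{k}\ge k^{k}/k!$) with $k=n(p-1)$; this yields
\[
L\ \ge\ \frac{\big(n(p-1)\big)^{n(p-1)}}{e^{\,n(p-1)}\,n^{p}\,(p-1)^{(p-1)^{2}}}.
\]

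Next I would simplify the right-hand side $R$ of the statement: its exponent of $p-1$ is $\tfrac{(n-4p)(p-1)}{2}+(p-1)(p+2)=\tfrac{(p-1)(n-2p+4)}{2}$, so $R=n^{n(p-2)/4}\,(p-1)^{(p-1)(n-2p+4)/2}$. Taking logarithms of the displayed lower bound for $L$ and of $R$ and collecting terms, one finds
\[
\ln L-\ln R\ \ge\ \alpha\,\ln n+\beta\,\ln(p-1)-n(p-1),
\]
with $\alpha=n(p-1)-p-\tfrac{n(p-2)}{4}=\tfrac{n(3p-2)}{4}-p$ and $\beta=n(p-1)-(p-1)^{2}-\tfrac{(p-1)(n-2p+4)}{2}=\tfrac{(p-1)(n-2)}{2}$.

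It remains to see that the right-hand side of the last display is $\ge 0$. Since $p\ge 2$ and $n\ge 2$ one has $\beta\ge 0$, so $\beta\,\ln(p-1)\ge 0$ and that term is harmless (in the boundary case $p=2$ it vanishes, $\ln(p-1)=0$). Likewise $\alpha\ge 3p(p-1)>0$, using $n\ge 4p$. The only genuinely negative contribution is $-n(p-1)$, which is inherited from the factor $e^{n(p-1)}$ in the Stirling-type lower bound; absorbing it is the single place where the hypothesis $n\ge 4p$ is really used, because it forces $n\ge 8$, hence $\ln n\ge\ln 8>2$, and therefore
\[
\alpha\,\ln n\ \ge\ 2\alpha\ =\ \frac{n(3p-2)}{2}-2p\ =\ n(p-1)+\Big(\frac{np}{2}-2p\Big)\ \ge\ n(p-1),
\]
the last step because $n\ge 4$. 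Hence $\ln L\ge\ln R$, and since both sides are positive this is equivalent to the claimed inequality.

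In short, the proof is a chain of very lossy estimates; the one point worth checking carefully — indeed the only real obstacle — is that the $-n(p-1)$ term produced by $e^{n(p-1)}$ is dominated by $\alpha\ln n$, which is exactly what the standing assumption $p\le n/4$ (equivalently $n\ge 4p$) guarantees.
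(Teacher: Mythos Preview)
Your argument is correct. Both your proof and the paper's follow the same overall strategy---replace the factorials by crude power bounds and then do elementary algebra---but the execution differs. The paper uses the weaker estimate $k!\ge(k/2)^{k/2}$ for $(n(p-1))!$ and then splits the resulting expression into an ``$n$-part'' and a ``$(p-1)$-part'', each handled by a separate chain of algebraic inequalities. You instead use the sharper Stirling-type bound $k!\ge(k/e)^{k}$, pass to logarithms, and reduce the whole comparison to a single inequality $\alpha\ln n\ge n(p-1)$, which follows immediately from $n\ge 4p\ge 8$. Your route is shorter and more transparent; the price is the extra $-n(p-1)$ term coming from $e^{n(p-1)}$, but as you observe this is comfortably absorbed by the coefficient of $\ln n$ under the standing hypothesis $p\le n/4$.
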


Proposition \ref{prop:QuantitativeEstim1} cares about the behavior of the quotient between the constants in Theorem \ref{thm:ImprovedAlonso_Meyer} and \eqref{eq:IneqOriginalABBC} for large values of $n$, at least as large as $n\geq 4|\sigma|$. Computational experiments suggest that the constant in the former is actually always better than in the latter. In order to highlight this, we now compare the constants in the most different case to the ones covered in Proposition \ref{prop:QuantitativeEstim1} of $n=|\sigma|+1$. Again, its proof is postponed to Section \ref{sec:Appendix}.
\begin{propo}\label{prop:quantitatiVEstim2}
    For every $n \geq 5$, we have that
    \[
    \frac{\frac{(n-1)!}{n^{n-1}}}{\frac{(n-2)!^{n-2}}{(n(n-2))!}} \geq 2^{n-5}(n-2)^{n-4}(n(n-2))^{\frac{n(n-2)}{16}}.
    \]
\end{propo}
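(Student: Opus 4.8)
The plan is to bound the left-hand side from below by Stirling-type estimates on each of the three factorials, take logarithms, and thereby reduce the claim to an elementary polynomial inequality in $n$, $\ln n$ and $\ln(n-2)$.

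First I would write
\[
Q_n := \frac{(n-1)!/n^{n-1}}{(n-2)!^{n-2}/(n(n-2))!} \;=\; \frac{(n-1)!\,\big(n(n-2)\big)!}{n^{n-1}\,\big((n-2)!\big)^{n-2}},
\]
and apply the bound $k!\geq (k/e)^k$ (valid for all integers $k\geq 1$) to the two factorials in the numerator together with the trivial $k!\leq k^k$ to the factorial in the denominator. This gives an explicit lower bound for $Q_n$ as a product of powers of $n$, $n-1$, $n-2$ and $e$. Taking logarithms and collecting terms — writing $\ln\big(n(n-2)\big)=\ln n+\ln(n-2)$, after which the $\ln(n-2)$-parts of the two dominant contributions $n(n-2)\ln\big(n(n-2)\big)$ and $-(n-2)^2\ln(n-2)$ telescope to the harmless $2(n-2)\ln(n-2)$ — yields
\[
\ln Q_n \;\geq\; (n-1)\ln(n-1) + 2(n-2)\ln(n-2) + (n^2-3n+1)\ln n - (n^2-n-1).
\]
On the other side, $\ln\!\big(2^{n-5}(n-2)^{n-4}(n(n-2))^{n(n-2)/16}\big)=(n-5)\ln 2+(n-4)\ln(n-2)+\tfrac{1}{16}(n^2-2n)\big(\ln n+\ln(n-2)\big)$, so after subtraction it remains to verify that a linear combination of $\ln n$, $\ln(n-1)$ and $\ln(n-2)$ with explicit polynomial-in-$n$ coefficients is nonnegative for $n\geq 5$.

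This last step is routine and I expect no genuine obstacle: the coefficient of $\ln n$ is $\tfrac{1}{16}(15n^2-46n+16)$, which is positive and dominant; the possibly-negative coefficient $\tfrac{1}{16}n(18-n)$ of $\ln(n-2)$ is absorbed into the $\ln n$-term via $\ln(n-2)\leq\ln n$; and the leftover linear deficit $-(n^2-n-1)-(n-5)\ln 2$ is controlled by the surviving $\ln n$- and $\ln(n-1)$-contributions once one uses $\ln n\geq\ln 5$, which reduces the problem to an elementary quadratic-in-$n$ inequality that holds for $n\geq 5$. The target inequality is in fact extremely loose — $Q_n$ grows like $n^{n^2}$ whereas the right-hand side grows only like $n^{n^2/8}$ — so even these crudest admissible forms of Stirling's formula leave an enormous margin; the only care needed is bookkeeping of the coefficients, and a direct numerical check for the first few values of $n$ (e.g. $n=5$, where the left-hand side is already of order $10^8$ and the right-hand side is below $40$) is available should one prefer to sidestep the general estimate for small $n$. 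If a completely uniform argument is wanted, one can instead keep the sharper factor $e^{-(n-2)^2}$ in the Stirling upper bound for $\big((n-2)!\big)^{n-2}$, which only widens the margin.
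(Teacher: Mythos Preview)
Your approach is correct and in the same spirit as the paper's: both reduce the claimed inequality to elementary factorial estimates, exploit the fact that $Q_n$ grows like $n^{n^2}$ while the target only grows like $n^{n^2/8}$, and finish with a routine comparison of polynomial and logarithmic terms. The difference lies only in which factorial bounds are chosen. You use the standard Stirling estimates $(k/e)^k\le k!\le k^k$, take logarithms, and collect coefficients; the paper instead uses the cruder ad hoc bounds $k!\ge (k/2)^{k/2}(k/4)^{k/4}$ and $k!\le k^{k/2}(k/2)^{k/2}$ and manipulates the resulting powers directly, without ever passing to logarithms. Your route is arguably cleaner and the bookkeeping is more transparent, at the cost of the small case-split you flag (the $\ln(n-2)$-absorption via $\ln(n-2)\le \ln n$ is only a lower bound when $n>18$, so for $5\le n\le 18$ you must either drop that nonnegative term or invoke the numerical check you mention); the paper's argument avoids this split but pays for it with less familiar inequalities. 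Neither approach is deeper than the other, and your outline leaves no genuine obstacle.
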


Let us observe that even though Proposition \ref{prop:quantitatiVEstim2} skips the case of $n=4$, the constant achieved in Theorem \ref{thm:ImprovedAlonso_Meyer} is surely better than the one achieved in \eqref{eq:IneqOriginalABBC} when $n=|\sigma|+1$, since in that case we have that the quotient of the constants equals
\[
\frac{\frac{(n-1)!}{n^{n-1}}}{\frac{(n-2)!^{n-2}}{(n(n-2))!}} = \frac{\frac{3!}{4^3}}{\frac{2!^{2}}{(4\cdot 2)!}}= 945.
\]

As a final result, we show that if we already assume that $\vol{K\cap(x+H_\sigma^\bot)}$ is constant for every $x\in P_{H_\sigma}K$ and $K$ is unconditional, then we can actually show Conjecture \ref{conj:scover}.
\begin{propo}
    Let $K\in\mathcal K^n$ and let 
    $(\sigma_1,\dots,\sigma_m)$ a $s$-cover of $\sigma\subset[n]$. If $K$ is unconditional and $\vol{K\cap(x+H_\sigma^\bot)}$ is constant for every $x\in K\cap H^\bot_\sigma$, then Conjecture \ref{conj:scover} holds for $K$, $\sigma$, $\sigma_1$,\dots,$\sigma_m$.
\end{propo}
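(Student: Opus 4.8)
The plan is to use the constant-fiber hypothesis to collapse the statement, up to a common positive factor, onto Liakopoulos' inequality \eqref{eq:Liako19} applied to the projection $L:=P_{H_\sigma}K$. First I would record what the hypotheses give. Since $K$ is unconditional and has nonempty interior, $0\in K$, hence $0\in L:=P_{H_\sigma}K\in\mathcal K^{|\sigma|}$, and $L$ is again unconditional. Consider $f:H_\sigma\to[0,+\infty)$, $f(x)=\vol{K\cap(x+H_\sigma^\bot)}$; its support is exactly $P_{H_\sigma}K$, and, identifying $H_\sigma$ with $\mathbb R^{|\sigma|}$, the hypothesis says $f\equiv\rho$ on $P_{H_\sigma}K$, where $\rho:=\max_{x\in\mathbb R^n}\vol{K\cap(x+H_\sigma^\bot)}$ (this maximum depends only on $P_{H_\sigma}x$). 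Up to the values of $f$ on $\mathrm{bd}(\mathrm{supp}(f))$, which are irrelevant for integrals as recalled in Section \ref{sec:ProofsOnEqualities}, we thus have $f=\rho\,\chi_L$, so that $\vol{K}=\int_{H_\sigma}f(x)\,dx=\rho\,\vol{L}$, with $\rho>0$ and $\vol{L}>0$.

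Next I would rewrite each section. Writing $\tau:=[n]\setminus\sigma$, one has the orthogonal decomposition $H_{\sigma_j}^\bot=H_{[n]\setminus\sigma_j}=H_{\tau\cup(\sigma\setminus\sigma_j)}=H_\sigma^\bot\oplus H_{\sigma\setminus\sigma_j}$, whence by Fubini's theorem and the previous paragraph
\[
\vol{K\cap H_{\sigma_j}^\bot}=\int_{H_{\sigma\setminus\sigma_j}}\vol{K\cap(z+H_\sigma^\bot)}\,dz=\int_{H_{\sigma\setminus\sigma_j}}f(z)\,dz=\rho\,\vol{L\cap H_{\sigma\setminus\sigma_j}}
\]
for every $j=1,\dots,m$.

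Plugging these identities into Conjecture \ref{conj:scover}, the left-hand side equals $(\rho\vol{L})^{m-s}\rho^{s}=\rho^{m}\vol{L}^{m-s}$ and the right-hand side equals $\rho^{m}\,\frac{\prod_{j=1}^m(|\sigma|-|\sigma_j|)!}{|\sigma|!^{m-s}}\prod_{j=1}^m\vol{L\cap H_{\sigma\setminus\sigma_j}}$; cancelling $\rho^m>0$, the assertion becomes
\[
\vol{L}^{m-s}\ \geq\ \frac{\prod_{j=1}^m|\sigma\setminus\sigma_j|!}{|\sigma|!^{m-s}}\prod_{j=1}^m\vol{L\cap H_{\sigma\setminus\sigma_j}}.
\]
Since $(\sigma_1,\dots,\sigma_m)$ is an $s$-cover of $\sigma$, counting gives $|\{i:j\notin\sigma_i\}|=m-s$ for each $j\in\sigma$, so $(\sigma\setminus\sigma_1,\dots,\sigma\setminus\sigma_m)$ is an $(m-s)$-cover of $\sigma$; as $L\in\mathcal K^{|\sigma|}$, this last display is precisely \eqref{eq:Liako19} in $\mathbb R^{|\sigma|}$, and the proof is complete.

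The argument is essentially a bookkeeping reduction, so there is no serious obstacle; the points that deserve care are (i) that $f=\rho\chi_L$ holds after the standard boundary normalization of Section \ref{sec:ProofsOnEqualities}, (ii) the orthogonal splitting $H_{\sigma_j}^\bot=H_\sigma^\bot\oplus H_{\sigma\setminus\sigma_j}$ underlying the Fubini step, and (iii) the role of unconditionality, which is used only to ensure $0\in L$, so that the possibly degenerate pieces of the cover with $\sigma_i=\sigma$ (i.e.\ $\sigma\setminus\sigma_i=\emptyset$) cause no trouble when invoking \eqref{eq:Liako19}. Note that, in contrast to Theorem \ref{thm:sCoverDecom1Covers}, no $1$-reducibility hypothesis is required here, precisely because Liakopoulos' inequality \eqref{eq:Liako19} is available for arbitrary $s$-covers.
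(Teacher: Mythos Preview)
Your proof is correct and follows essentially the same route as the paper: both reduce the conjecture, via the constant-fiber hypothesis and Fubini, to Liakopoulos' inequality \eqref{eq:Liako19} applied to a $|\sigma|$-dimensional body with the $(m-s)$-cover $(\sigma\setminus\sigma_1,\dots,\sigma\setminus\sigma_m)$. The only cosmetic difference is that you work with $L=P_{H_\sigma}K$ while the paper works with $K\cap H_\sigma$; for unconditional $K$ these coincide, so the two arguments are the same in substance.
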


\begin{proof}
    Let us observe that $(\sigma\setminus\sigma_1,\dots,\sigma\setminus\sigma_m)$ is a $(m-s)$-cover of $\sigma$. Notice also that due to the unconditionality of $K$ then
    \[
    \vol{K} = \int_{K\cap H_\sigma}\vol{K\cap(x+H_\sigma^\bot)}dx = \vol{K\cap(x+H_\sigma^\bot)}\vol{K\cap H_\sigma}
    \]
    and
    \[
    \begin{split}
    \vol{K\cap H_{\sigma_j}^\bot} & = \int_{K\cap H_{\sigma\setminus\sigma_j}}\vol{K\cap(x+H_\sigma^\bot)}dx \\
    & = \vol{K\cap H_\sigma^\bot} \vol{K\cap H_{\sigma\setminus\sigma_j}}.
    \end{split}
    \]
    Applying \eqref{eq:Liako19} to $K\cap H_\sigma$ and to the $(m-s)$-cover $(\sigma\setminus\sigma_1,\dots,\sigma\setminus\sigma_m)$ we get that
    \[
    \begin{split}
    \vol{K}^{m-s} & = \vol{K\cap(x+H_\sigma^\bot)}^{m-s}\vol{K\cap H_\sigma}^{m-s} \\
    & \geq \vol{K\cap(x+H_\sigma^\bot)}^{m-s} \frac{\prod_{j=1}^m(|\sigma|-|\sigma_j|)!}{|\sigma|!^{m-s}} \prod_{j=1}^m \vol{K\cap H_{\sigma\setminus\sigma_j}}
    \end{split}
    \]
    and thus
    \[
    \begin{split}
        \vol{K}^{m-s}\vol{K\cap H_\sigma^\bot}^{s} & \geq \frac{\prod_{j=1}^m(|\sigma|-|\sigma_j|)!}{|\sigma|!^{m-s}} \prod_{j=1}^m \Big[\vol{K\cap H_{\sigma\setminus\sigma_j}}\vol{K\cap H_\sigma^\bot}\Big] \\
        & = \frac{\prod_{j=1}^m(|\sigma|-|\sigma_j|)!}{|\sigma|!^{m-s}} \prod_{j=1}^m \vol{K\cap H_{\sigma_j}^\bot},
    \end{split}
    \]
    concluding the proof.
\end{proof}

\section{Appendix}\label{sec:Appendix}

\begin{proof}[Proof of Proposition \ref{prop:QuantitativeEstim1}]
    Let us observe that the factorial is lower bounded by $k! \geq \left(\frac k2\right)^\frac k2$ for every $k\in\mathbb N$, then
\begin{equation}\label{eq:quantitative1}
\begin{split}
    \dfrac{\dfrac{p!}{n^{p}}}{\dfrac{(p-1)!^{p-1}}{(n(p-1))!}} & =\dfrac{p!(n(p-1))!}{n^{p}(p-1)!^{p-1}} \geq \dfrac{\left(\frac{n}{2}\right)^{\frac{n(p-1)}{2}}}{n^p}\dfrac{p!(p-1)^{\frac{n(p-1)}{2}}}{(p-1)!^{p-1}} \\
    & \geq\dfrac{\left(\frac{n}{2}\right)^{\frac{n(p-1)}{2}}}{n^p} \cdot \dfrac{(p-1)^{\frac{n(p-1)}{2}}}{(p-1)!^{p-2}}.
\end{split}
\end{equation}

On the one hand, the left-hand side of the last line in \eqref{eq:quantitative1} can be rewritten as
\begin{eqnarray*}
    \dfrac{\left(\frac{n}{2}\right)^{\frac{n(p-1)}{2}}}{n^p}&=&\left(\frac{n}{2}\right)^{\frac{n(p-1)}{4}}\left(\frac{n}{2}\right)^{\frac{n(p-1)}{4}}\frac{1}{n^p}=\left(\frac{n}{2}\right)^{\frac{n(p-1)}{4}}\left(\frac{\left(\frac{n}{2}\right)^\frac{n}{4}}{n}\right)^p\frac{1}{\left(\frac{n}{2}\right)^\frac{n}{4}}\\
    &=&\left(\frac{n}{2}\right)^{\frac{n(p-2)}{4}}\left(\frac{\left(\frac{n}{2}\right)^\frac{n}{4}}{n}\right)^p=\left(\frac{n}{2}\right)^{\frac{n(p-2)}{4}}\left(\frac{1}{2}\left(\frac{n}{2}\right)^{\frac{n}{4}-1}\right)^p.
\end{eqnarray*}
Since $\frac{n}{4}\geq1$, we thus get that
\begin{eqnarray*}
    \left(\frac{n}{2}\right)^{\frac{n(p-2)}{4}}\left(\frac{1}{2}\left(\frac{n}{2}\right)^{\frac{n}{4}-1}\right)^p&\geq&\left(\frac{n}{2}\right)^{\frac{n(p-2)}{4}}\left(\frac{n}{2}\right)^{p\left(\frac{n}{4}-2\right)}.
\end{eqnarray*}
Finally, using the fact that $\frac{n}{2}\geq\sqrt{n}$ whenever $n\geq 4$, and since $n\geq 4p$ then 
\begin{eqnarray*}
    \left(\frac{n}{2}\right)^{\frac{n(p-2)}{4}}\left(\frac{n}{2}\right)^{p\left(\frac{n}{4}-2\right)}
    = \left(\frac{n}{2}\right)^{\frac{np-n-4p}{2}} \geq \left(\frac{n}{2}\right)^{\frac{n(p-2)}{2}}\geq n^{\frac{n(p-2)}{4}}.
\end{eqnarray*}

On the other hand, taking into account that $k!\leq k^k$ for every $k\in\mathbb N$, the right-hand side of the last line in \eqref{eq:quantitative1} fulfills
\begin{eqnarray*}
    \dfrac{(p-1)^{\frac{n(p-1)}{2}}}{(p-1)!^{p-2}}&=&\dfrac{(p-1)^{\frac{4p(p-1)}{2}}}{(p-1)!^{p-2}}(p-1)^{\frac{(n-4p)(p-1)}{2}} \\
    &\geq & \dfrac{(p-1)^{\frac{4p(p-1)}{2}}}{(p-1)^{(p-1)(p-2)}}(p-1)^{\frac{(n-4p)(p-1)}{2}}\\
    &=&(p-1)^{(p-1)(p+2)}(p-1)^{\frac{(n-4p)(p-1)}{2}}.
\end{eqnarray*}
Glueing all together within \eqref{eq:quantitative1}, we conclude that
\begin{eqnarray*}
    \dfrac{\dfrac{p!}{n^{p}}}{\dfrac{(p-1)!^{p-1}}{(n(p-1))!}}\geq n^{\frac{n(p-2)}{4}}(p-1)^{\frac{(n-4p)(p-1)}{2}}(p-1)^{(p-1)(p+2)}.\qedhere
\end{eqnarray*}
\end{proof}

\begin{proof}[Proof of Proposition \ref{prop:quantitatiVEstim2}]
    We first remember that 
    \[
    k! \geq \left(\frac k2\right)^{\frac k2} \left(\frac k4\right)^{\frac k4}
    \]
    for every $k\in\mathbb N$. We thus get that
    \begin{equation}\label{eq:Quant2}
    \frac{(n(n-2))!}{(n-2)!^{n-2}} \geq \frac{(n-2)^{\frac{n(n-2)}{2}}}{(n-2)!^{n-2}} \left(\frac n2\right)^{\frac{n(n-2)}{2}} \left(\frac{n(n-2)}{4}\right)^{\frac{n(n-2)}{4}}.
    \end{equation}
    We now remember that 
    \[
    k!\leq k^{\frac k2} \left(\frac k2\right)^{\frac{k}{2}}
    \]
    for every $k\in\mathbb N$. Now let us notice that 
    \[
    \frac{(n-2)^{\frac{n}{2}}}{(n-2)!} \geq \frac{(n-2)^{\frac{n}{2}}}{(n-2)^{\frac{n-2}{2}}\left(\frac{n-2}{2}\right)^{\frac{n-2}{2}}} = (n-2)^{-\frac n2+2} \cdot 2^{\frac{n-2}{2}}. 
    \]
    Inserting the above within \eqref{eq:Quant2} we obtain that the expression is greater than or equal to
    \[
    \begin{split}
     & (n-2)^{(n-2)(-\frac n2+2)} 2^{\frac{(n-2)^2}{2}} \left(\frac n2\right)^{\frac{n(n-2)}{2}} \left(\frac{n(n-2)}{4}\right)^{\frac{n(n-2)}{4}} \\
     & \geq (n-2)^{\frac{n(n-2)}{2}+(n-2)(-\frac n2+2)} \left(\frac{n(n-2)}{4}\right)^{\frac{n(n-2)}{4}} 2^{\frac{(n-2)^2}{2}-\frac{n(n-2)}{2}} \\
     & = (n-2)^{2n-4} \left(\frac{n(n-2)}{4}\right)^{\frac{n(n-2)}{4}} 2^{2-n}.
    \end{split}
    \]
    Thus, we get that
    \[
    \begin{split}
    \frac{(n-1)!(n(n-2))!}{n^{n-1}(n-2)!^{n-2}} & \geq \frac{(n-1)!}{n^{n-1}} (n-2)^{2n-4} \left(\frac{n(n-2)}{4}\right)^{\frac{n(n-2)}{4}} \frac{1}{2^{n-2}} \\
    & = \frac{(n-1)!}{2^{n-2}} \frac{(n-2)^{2n-4}}{n^{n-1}} \cdot \left(\frac{n(n-2)}{4}\right)^{\frac{n(n-2)}{4}}.
    \end{split}
    \]
    On the one hand, since $n(n-2) \geq 4$, the right-hand side above is greater than or equal to
    \[
    \left(n(n-2)\right)^{\frac{n(n-2)}{16}}.
    \]
    On the other hand, since 
    \[
    (n-1)!=(n-1)\cdots 4\cdot 3\cdot 2\cdot 1 \geq 4^{n-4}\cdot 3\cdot 2
    \]
    and since
    \[
    \begin{split}
    \frac{(n-2)^{2n-4}}{n^{n-1}} & = \left(\frac{n-2}{n}\right)^{n-1} (n-2)^{n-3} = \left(1-\frac 2n\right)^{n-1} (n-2)^{n-3} \\
    & \geq e^{-2} (n-2)^{n-3},
    \end{split}
    \]
    we thus get on the left-hand side that
    \[
    \begin{split}
    \frac{(n-1)!}{2^{n-2}} \frac{(n-2)^{2n-4}}{n^{n-1}}  & \geq \frac{4^{n-4}\cdot 3\cdot 2}{2^{n-2}} e^{-2}(n-2)^{n-3} \\
    & = 2^{n-5}\frac{3(n-2)}{e^2}(n-2)^{n-4} \\
    & \geq 2^{n-5}(n-2)^{n-4},
    \end{split}
    \]
    where the last inequality holds because $3(n-2)e^{-2} \geq 1$ for every $n\geq 5$. Thus we can conclude that
    \[
    \frac{(n-1)!(n(n-2))!}{n^{n-1}(n-2)!^{n-2}} \geq 2^{n-5}(n-2)^{n-4}(n(n-2))^{\frac{n(n-2)}{16}}
    \]
    for every $n\geq 5$, finishing the proof.
\end{proof}

\end{document}